\theoremstyle{plain}
\newtheorem{theorem}{Theorem}
\newtheorem*{theorem*}{Theorem}
\newtheorem{lemma}[theorem]{Lemma}
\theoremstyle{definition}
\newtheorem{definition}[theorem]{Definition}
\theoremstyle{remark}
\newtheorem{remark}[theorem]{Remark}
\newcommand*{\avint}{\mathop{\ooalign{$\int$\cr$-$}}}
\newcommand{\RR}{\mathbb{R}}
\newcommand{\NN}{\mathbb{N}}
\newcommand{\D}{\Delta}
\newcommand{\test}{C_c^{\infty}}
\newcommand{\Om}{\Omega}
\newcommand{\II}{\iint_}
\newcommand{\I}{\int_}
\newcommand{\del}{\partial}
\numberwithin{equation}{section}
\numberwithin{theorem}{section}
\begin{document}
	\title[Bal-Mishra-Mohanta]{On the singular problem involving $g$-Laplacian}
	
	\author{$^1$Kaushik Bal}
	\email{$^1$kaushik@iitk.ac.in}
	\address{$^1$Indian Institute of Technology Kanpur, India}
	\author{$^2$Riddhi Mishra}
	\email{$^2$riddhi.r.mishra@jyu.fi}
	
	\author{$^3$Kaushik Mohanta}
	\email{$^3$kaushik.k.mohanta@jyu.fi}
	\address{$^{2,3}$University of Jyv\"askyl\"a, Finland}

	\subjclass{35R11, 35J62, 35A15}
	\keywords{Singular problem; variable singularity; fractional $g$-Laplacian}.
	\smallskip

	\begin{abstract}
		In this paper, we show that the existence of weak solution to the equation
		\begin{align*}
			\begin{split}
				(-\D_g)^su(x)&=f(x)u(x)^{-q(x)}\;\mbox{in}\; \Om,\\
				u&>0\;\mbox{in }\; \Om,\\
				u&=0\;\mbox{in}\;\RR^N\setminus\Om
			\end{split}
		\end{align*}
		where $\Om$ is a smooth bounded domain in $\RR^N$, $q\in C^1(\overline{\Om})$, and $(-\D_g)^s$ is the fractional $g$-Laplacian with $g$ is the antiderivative of a Young function and $f$ in suitable Orlicz space. This includes the mixed fractional $(p,q)-$Laplacian as a special case. The solution so obtained are also shown to be locally H\"older continuous.
	\end{abstract}
	
	\maketitle


	\section{Introduction}
	Nonlocal problems have been a subject of immense interest in mathematics recently. Various studies have been published to verify if the results of the Laplace operator can be suitably generalized for problems involving fractional Laplacian and its generalization. Continuing with the spirit of recent developments in the study of nonlocal operators, in this article, we consider the following problem
	\begin{equation}\label{eq problem}
		\begin{split}
			(-\D_g)^su(x)&=f(x)u(x)^{-q(x)}\; \mbox{in } \Om,\\
			u&>0\;  \mbox{in }\; \Om,\\
			u&=0\;  \mbox{in }\; \RR^N\setminus\Om
		\end{split}
	\end{equation} 
	with $\Om$ being a smooth bounded domain in $\RR^N$ and $q$ is a non-negative $C^1$ function in $\overline{\Om}$, and the fractional $g$-Laplacian operator is defined as
	$$
	(-\D_g)^su(x):=\I{\RR^N}g\left(\frac{u(x)-u(y)}{|x-y|^s}\right)\frac{dy}{|x-y|^{N+s}}
	$$ 
	with $g:[0,\infty)\to\RR$ is a right continuous function satisfying the following assumptions:
	\begin{enumerate}[label=($H_{\alph*}$)]
		\item $g(0)=0;\;g(t)>0$  for $t>0$ and $\lim_{t\to+\infty}g(t)=\infty$.
		\item $g$ is convex on $(0,\infty)$.
		\item $g'$ is nondecreasing on $(0,\infty)$, and hence on $\RR\setminus \{0\}$.
	\end{enumerate}
	Given $g:\RR\to\RR$, we define $G:[0,\infty)\to[0,\infty)$, called the N-function or Young's function by 
	$$
	G(t):=\I{0}^t g(\tau)d\tau.
	$$
	We also assume the following additional conditions on $G$ and $g$:
	\begin{enumerate}[label=$(H_\alph*)$,start=5]
		\item $g=G'$ is absolutely continuous, so it is differentiable almost everywhere.
		\item $ \I{0}^1\frac{G^{-1}(\tau)}{\tau^{\frac{N+s}{N}}}d\tau<\infty$ and $\I{1}^\infty\frac{G^{-1}(\tau)}{\tau^{\frac{N+s}{N}}}d\tau=\infty$
		\item There exist $p^+,p^-$ such that 
		$$
		1<p^--1\leq \frac{tg'(t)}{g(t)}\leq p^+-1\leq \infty \quad t>0.
		$$
	\end{enumerate}
	Note that we will always be assuming conditions $(H_a)-(H_g)$ on $g \;\mbox{and}\; G$ throughout the whole paper until otherwise specified. In literature, $G$ is known as a Young function or an $N$-function.
	
	\begin{remark}
		The following examples of $G$ fits our framework:
		\begin{enumerate}[label=(\roman*)]
			\item  $G_{p}(t):=\frac{1}{p}t^{p}$, where $p\geq 2$. 
			\item If one takes $G_{p_1,p_2}(t):=\frac{1}{p_1}t^{p_1}+\frac{1}{p_2}t^{p_2}$, where $p_1,p_2\geq 2$. One gets, $$(-\D_{g_{p_1,p_2}})^s=(-\D_{p_1})^s+(-\D_{p_2})^s.$$
			\item For $a,b,c>0$ and $g(t)=t^a\log(b+ct)$ we get, $$G(t)=\frac{t^{1+a}}{(1+a)^2}[H^2_1(1+a,1,2+a,-\frac{ct}{b})+(1+a)\log(b+ct)-1]$$
			with $p^-=1+a,\;p^{+}=2+a$, where $H^2_1$ is a hyper geometric function.
		\end{enumerate}
	\end{remark}
	Before we start with the preliminaries, let us briefly recall some related literature concerning the singular problems. Singular problems have a long history starting from the seminal work of Crandall-Rabinowitz-Tartar \cite{CrRaTa}, where for a suitably regular $f$, the problem $-\Delta u= f(x)u^{-\delta}$ was considered in a bounded domain and is shown to admit a classical solution irrespective of the sign of $\delta>0$, subject to Dirichlet boundary condition. The classical solution so obtained was shown to be the weak solution provided $0<\delta<3$ in another celebrated work of Lazer-Mckenna \cite{LaMc}. Singularly perturbed problems were also studied in \cite{Gia1, Gia2} and the reference therein. The case of $f\in L^p(\Omega),\;p\geq 1$ was first treated in Boccardo-Orsina \cite{BoOr}, who showed the existence and regularity results for different cases of $m$ and $\delta$. One may find the p-Laplace generalization of Boccardo-Orsina's work in Scuinzi et al. \cite{CaScTr}, where the delicate issue of uniqueness was also addressed. Anisotropic Laplacians with singular nonlinearities have also been dealt with in several papers, see \cite{Ba1, Ba2, BaGa} to name a few. In \cite{BaBeDe}, the fractional problem given by 
	\begin{align}
		(-\D)^su(x)&=\lambda f(x)u(x)^{-\gamma} +Mu^p\;\mbox{in } \Om,\nonumber\\
		u&>0 \;\mbox{in } \Om,\label{eq problem-0}\\
		u&=0 \;\mbox{in } \RR^N\setminus\Om,\nonumber
	\end{align}
	was first considered under the condition that $n
	>2s,M\geq 0, 0<s<1, 1<p<2_s^{*}-1$ and shown to admit a distributional solution for $f\in L^m(\Om)$ and $\lambda>0$ small.
	In \cite{CaMoScSq}, the authors studied the problem
	\begin{align*}
		(-\D_p)^su(x)&=f(x)u(x)^{-\gamma} \;\mbox{in } \Om,\nonumber\\
		u&>0 \;\mbox{in } \Om,\label{eq problem-2}\\
		u&=0 \;\mbox{in } \RR^N\setminus\Om, \nonumber
	\end{align*}
	and proved the existence and uniqueness results. The first instance, to the best of our knowledge of studying variable exponent singularities was in Carmona-Mart\'{\i}nez-Aparicio \cite{CaMa} where the rather surprising phenomenon of the restriction of the nonlinearity to $1$ near the boundary of the domain for a weak solution was studied in contrast to the constant exponent where such restriction is imposed on the whole domain. Similar problems involving fractional p-Laplacian with variable exponent may be found in Garain-Tuhina \cite{GaMu}. However, by considering the Orlicz setup, we can address a large class of interesting problems in one go.
	In the next section, we give some preliminary results which we are already known. 
	
	\section{Preliminaries}
	
	Let us start by introducing the reader to the functional setup related to the fractional Orlicz-Sobolev spaces. A detailed discussion can be found in \cite{BMRS, BoSa, BaOuTa}. Throughout the section, we shall assume $\Om$ to be a bounded domain and $s\in(0,1)$. Throughout the rest of the article, $C$ will stand for a generic constant, which may vary in each of its appearances. First, we define the modular functions:
	$$ M_{L^G(\Om)}(f):= \I{\Om} G(|f(x)|) dx \  \textrm{ and } M_{W^{s,G}(\Om)}(f):= \I{\Om}\I{\Om}G\left( \frac{| f(x)-f(y) |}{| x-y |^s}\right) \frac{dxdy}{|x-y|^N}. $$
	The Banach space 
	$$
	L^G(\Om):=\left\{ f: \Om \to \RR \  \textrm{measurable} \ \Big| \ \exists \ \lambda>0 \mbox{ such that } M_{L^G(\Om)}\left(\frac{f}{\lambda}\right)<\infty\right\} 
	$$ is called the Orlicz space. This space is equipped with the norm
	$$
	\|f\|_{L^G(\Om)}:=\inf\{\lambda>0\ |\ M_{L^G(\Om)}\left(\frac{f}{\lambda}\right)\leq 1\}.
	$$
	The infimum in the above definition is known to be achieved.
	The fractional Orlicz-Sobolev spaces are defined as
	$$
	W^{s,G}(\Om):=\left\{ f\in L^G(\Om) \ \Big| \ \exists \ \lambda>0 \mbox{ such that } M_{W^{s,G}(\Om)}\left(\frac{f}{\lambda}\right)<\infty\right\}.
	$$
	This space is equipped with the seminorm
	$$
	\|f\|_{W^{s,G}(\Om)}:=\inf\{\lambda>0\ |\ M_{W^{s,G}(\Om)}\left(\frac{f}{\lambda}\right)\leq 1\}.
	$$
	However, we shall mainly be working with the spaces defined by
	$$\hat{W}^{s,G}(\Om):=\left\{ f\in L^{G}_{loc}(\RR^N):\; \exists\;U\Subset\Om\;\mbox{s.t}\; ||u||_{s,G,U}+\int_{\RR^N}g(\frac{|u(x)|}{1+|x|^s})\;\frac{dx}{(1+|x|)^{n+s}}<\infty\right\}$$
	and,
	$$
	W^{s,G}_0(\Om):=\left\{ f\in W^{s,G}(\RR^N) \ \Big| \  f\equiv 0\ \mbox{ on }\RR^N\setminus \Om\right\}.
	$$
	$W^{s,G}_0(\Om)$ is equipped with the norm $\|\cdot\|_{W^{s,G}(\RR^N)}$.
	Note that for $G(t)=t^p;\;1<p<\infty$, $L^G(\Om)$ and $W^{s,G}(\Om)$ are well known Lebesgue space $L^p(\Om)$ and the fractional Sobolev space $W^{s,p}(\Om)$ respectively (see \cite[p.~524]{hhg}).
	 
	We now discuss some properties of these spaces which we shall use in the next section. We start by observing that the assumption $(H_g)$ implies
	\begin{equation}\label{eq growth}	
		2<p^-\leq \frac{tg(t)}{G(t)}\leq p^+<\infty, \quad t>0.
	\end{equation}
	To see this, note that assumption $(H_g)$ implies $(tg(t))'\leq p^+G(t)'$.
	The following two lemmas will be used frequently in the rest of the article.
	
	\begin{lemma}\label{lemma delta2}
		Let $G$ be an $N$-function, and let $g=G'$ satisfy $(H_a)-(H_g)$. Then
		$$
		\lambda^{p^-}G(t) \leq G(\lambda t) \leq \lambda^{p^+}G(t)\quad \forall\ \lambda\geq1, \ \forall t>0,
		$$
		where $p^+,p^-$ is the constant as defined in $(H_g)$.
		The above inequality is equivalent to
		$$
		\lambda^{p^-}G(t) \geq G(\lambda t) \geq \lambda^{p^+}G(t)\quad \forall\ 0 \leq \lambda\leq1, \ \forall t>0.
		$$
	\end{lemma}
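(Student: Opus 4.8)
The plan is to derive the two-sided power bound from the logarithmic derivative control in $(H_g)$, which is the standard $\Delta_2$-type argument. First I would fix $t>0$ and, for $\lambda\geq 1$, consider the function $\varphi(\lambda):=G(\lambda t)$. Differentiating gives $\varphi'(\lambda)=t\,g(\lambda t)$, so that
\begin{equation*}
\frac{\lambda\varphi'(\lambda)}{\varphi(\lambda)}=\frac{\lambda t\,g(\lambda t)}{G(\lambda t)}.
\end{equation*}
By the inequality \eqref{eq growth}, which was already deduced from $(H_g)$, the right-hand side lies in $[p^-,p^+]$ for every $\lambda>0$. Hence $p^-\le \lambda\varphi'(\lambda)/\varphi(\lambda)\le p^+$, i.e. $\frac{p^-}{\lambda}\le \frac{d}{d\lambda}\log\varphi(\lambda)\le \frac{p^+}{\lambda}$.

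Next I would integrate this differential inequality on the interval $[1,\lambda]$ (using $(H_e)$, which guarantees $g=G'$ is absolutely continuous so $\log\varphi$ is locally absolutely continuous and the fundamental theorem of calculus applies). Integrating the lower bound yields $\log\varphi(\lambda)-\log\varphi(1)\ge p^-\log\lambda$, that is $G(\lambda t)\ge \lambda^{p^-}G(t)$; integrating the upper bound yields $G(\lambda t)\le \lambda^{p^+}G(t)$. This establishes the first displayed chain of inequalities for all $\lambda\ge 1$ and all $t>0$.

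For the equivalence with the second displayed inequality, I would simply reparametrize: given $0\le\mu\le 1$ and $t>0$, set $\lambda=1/\mu\ge 1$ and $\tau=\mu t$, and apply the already-proved inequality to $\lambda$ and $\tau$. This gives $G(t)=G(\lambda\tau)$ sandwiched between $\lambda^{p^-}G(\tau)$ and $\lambda^{p^+}G(\tau)$, i.e. $\mu^{-p^-}G(\mu t)\le G(t)\le \mu^{-p^+}G(\mu t)$ after noting $\lambda^{p^\pm}=\mu^{-p^\pm}$; rearranging and renaming $\mu$ back to $\lambda$ produces exactly $\lambda^{p^-}G(t)\ge G(\lambda t)\ge \lambda^{p^+}G(t)$ for $0\le\lambda\le 1$. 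The case $\lambda=0$ is trivial since $G(0)=0$ by $(H_a)$.

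I do not anticipate a serious obstacle here; the only point requiring care is the justification that $\log G(\lambda t)$ is absolutely continuous in $\lambda$ so that integrating the derivative bound is legitimate — this is precisely what $(H_e)$ is for, together with the fact that $G(t)>0$ for $t>0$ (from $(H_a)$ and convexity) so the logarithm is well defined on the relevant range. One should also note in passing that \eqref{eq growth} itself, as remarked in the text, follows from $(H_g)$ via $(tg(t))'\le p^+ G'(t)$ and the analogous lower bound, so the whole argument is ultimately a consequence of hypotheses $(H_a)$–$(H_g)$ as claimed.
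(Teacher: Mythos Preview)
Your argument is correct and is essentially the same as the paper's: both integrate the logarithmic derivative bound $p^-\le \tau g(\tau)/G(\tau)\le p^+$ from \eqref{eq growth}, the only cosmetic difference being that the paper integrates $\frac{g(\tau)}{G(\tau)}$ over $\tau\in[t,\lambda t]$ while you integrate $\frac{d}{d\lambda}\log G(\lambda t)$ over $\lambda\in[1,\lambda]$, which are the same integral after the substitution $\tau=\lambda t$. Your explicit treatment of the $0\le\lambda\le1$ equivalence via the substitution $\lambda\mapsto 1/\lambda$ is fine and is simply spelled out in more detail than in the paper.
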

	\begin{proof}
		For any $\lambda>1$,
		$$
		\log(\lambda^{p^-})
		=\I{t}^{\lambda t}\frac{p^-}{\tau}d\tau
		\leq \I{t}^{\lambda t}\frac{g(\tau)}{G(\tau)}d\tau
		\leq \I{t}^{\lambda t}\frac{p^+}{\tau}d\tau
		=\log(\lambda^{p^+}).
		$$
		This implies
		$$
		\log \left(\lambda^{p^-}\right) \leq \log \left(\frac{G(\lambda t)}{G(t)}\right) \leq \log \left(\lambda^{p^+}\right).
		$$
		The lemma follows.
	\end{proof}
	
	An immediate consequence of \cref{lemma delta2} is the following
	
	\begin{lemma}\label{lemma equivalent norm}
		When $\|f\|_{W^{s,G}( \Om )}\leq1$, $$\|f\|_{W^{s,G}( \Om )}^{p^+} \leq M_{W^{s,G}( \Om )}(f) \leq \|f\|_{W^{s,G}( \Om )}^{p^-},$$ and when $\|f\|_{W^{s,G}( \Om )}\geq1$, $$\|f\|_{W^{s,G}( \Om )}^{p^-} \leq M_{W^{s,G}( \Om )}(f) \leq \|f\|_{W^{s,G}( \Om )}^{p^+}.$$
	\end{lemma}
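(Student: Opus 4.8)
The plan is to deduce \cref{lemma equivalent norm} directly from \cref{lemma delta2} together with the defining property of the Luxemburg-type seminorm, namely that the infimum $\lambda_0:=\|f\|_{W^{s,G}(\Om)}$ is attained, so that $M_{W^{s,G}(\Om)}(f/\lambda_0)\le 1$, and in fact (by continuity of the modular and monotone convergence) $M_{W^{s,G}(\Om)}(f/\lambda_0)=1$ whenever $\lambda_0>0$. All the work is a scaling argument inside the double integral defining $M_{W^{s,G}(\Om)}$.

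First I would treat the case $\lambda_0=\|f\|_{W^{s,G}(\Om)}\le 1$. Write $f=\lambda_0\cdot(f/\lambda_0)$ pointwise inside the quotient $\frac{|f(x)-f(y)|}{|x-y|^s}$, so that $G\!\left(\frac{|f(x)-f(y)|}{|x-y|^s}\right)=G\!\left(\lambda_0\,\frac{|(f/\lambda_0)(x)-(f/\lambda_0)(y)|}{|x-y|^s}\right)$. Apply \cref{lemma delta2} with the scaling factor $\lambda_0\le 1$: this gives the pointwise two-sided bound
\begin{equation*}
\lambda_0^{p^+}\,G\!\left(\frac{|(f/\lambda_0)(x)-(f/\lambda_0)(y)|}{|x-y|^s}\right)\le G\!\left(\frac{|f(x)-f(y)|}{|x-y|^s}\right)\le \lambda_0^{p^-}\,G\!\left(\frac{|(f/\lambda_0)(x)-(f/\lambda_0)(y)|}{|x-y|^s}\right).
\end{equation*}
Integrating against the measure $\frac{dx\,dy}{|x-y|^N}$ over $\Om\times\Om$ yields $\lambda_0^{p^+}\,M_{W^{s,G}(\Om)}(f/\lambda_0)\le M_{W^{s,G}(\Om)}(f)\le \lambda_0^{p^-}\,M_{W^{s,G}(\Om)}(f/\lambda_0)$, and since $M_{W^{s,G}(\Om)}(f/\lambda_0)=1$ (the infimum is achieved, and equals $1$ when $\lambda_0>0$; the degenerate case $\lambda_0=0$ forces $f$ constant a.e., hence $f\equiv 0$ in the relevant quotient and both inequalities are trivial), we obtain exactly $\|f\|_{W^{s,G}(\Om)}^{p^+}\le M_{W^{s,G}(\Om)}(f)\le\|f\|_{W^{s,G}(\Om)}^{p^-}$. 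For the case $\lambda_0\ge 1$, I would repeat the identical computation but now invoke the first inequality of \cref{lemma delta2} (valid for scaling factor $\ge 1$), which reverses the roles of $p^+$ and $p^-$ and gives $\|f\|_{W^{s,G}(\Om)}^{p^-}\le M_{W^{s,G}(\Om)}(f)\le\|f\|_{W^{s,G}(\Om)}^{p^+}$.

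I do not anticipate a serious obstacle here; the only point requiring a little care is justifying that $M_{W^{s,G}(\Om)}(f/\lambda_0)=1$ rather than merely $\le 1$ — i.e. that the infimum defining the seminorm is not only attained but saturates the constraint. This follows from the fact that $t\mapsto M_{W^{s,G}(\Om)}(f/t)$ is continuous and nonincreasing in $t>0$ on the set where it is finite (dominated/monotone convergence applied to the integrand $G(|f(x)-f(y)|/(t|x-y|^s))$, using that $G$ is continuous and nondecreasing), so if it were $<1$ at $t=\lambda_0$ it would remain $\le 1$ for slightly smaller $t$, contradicting minimality of $\lambda_0$. With that observation in hand the lemma is an immediate corollary of \cref{lemma delta2}, exactly as the text asserts.
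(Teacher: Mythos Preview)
Your proposal is correct and matches the paper's approach: the paper gives no detailed proof but simply states that \cref{lemma equivalent norm} is ``an immediate consequence of \cref{lemma delta2}'', and your scaling argument via $M_{W^{s,G}(\Om)}(f/\lambda_0)=1$ is precisely how one makes that immediacy explicit. Your justification that the infimum saturates the constraint (using continuity of $t\mapsto M_{W^{s,G}(\Om)}(f/t)$, which is guaranteed here because the $\Delta_2$-type bound from \cref{lemma delta2} keeps the modular finite for all $t>0$) is the only nontrivial point, and you handle it correctly.
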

	
	\begin{lemma}\label{lemma lindqvist}
		Let $G$ be an N-function satisfying  $(H_a)-(H_g)$. For any two real numbers $a$ and $b$, we have
		$$
		(g(b)-g(a))(b-a)
		\geq C(G)\;G(|b-a|).
		$$
		for some constant $C$ depending on the $N-$function $G$.
	\end{lemma}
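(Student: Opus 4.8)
The plan is to run a case analysis on the signs and relative sizes of $a$ and $b$, after normalising the configuration by symmetry. The inequality is trivial when $a=b$, and the substitution $(a,b)\mapsto(-b,-a)$ leaves both sides unchanged (recall that in the definition of $(-\D_g)^s$ the profile $g$ is taken odd and $G$ even); so one may always assume $b>a$, and in addition $b\ge|a|$ when $a,b$ have opposite signs, and $0<a<b$ when they have the same sign. Throughout I would use three ingredients: the monotonicity of $g$ and of $g'$ on $(0,\infty)$ coming from $(H_b)$ and $(H_c)$; the two-sided comparison $p^-G(t)\le tg(t)\le p^+G(t)$ of \eqref{eq growth} together with its differential companion $(p^--1)g(t)/t\le g'(t)$ from $(H_g)$; and the scaling estimates of \cref{lemma delta2}.

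In the opposite-sign case $a\le0\le b$ with $b\ge|a|$, I would write $g(b)-g(a)=g(b)+g(|a|)\ge g(b)\ge0$ and $0<b-a=b+|a|\le 2b$, so that $(g(b)-g(a))(b-a)\ge g(b)\,b\ge p^-G(b)$ by \eqref{eq growth}; since $\tfrac{b-a}{2}\le b$, \cref{lemma delta2} gives $G(b)\ge G(\tfrac{b-a}{2})\ge 2^{-p^+}G(b-a)$, closing this case. For the same-sign case $0<a<b$ I would set $h:=b-a$ and split on whether $b\ge 2a$ or $b<2a$. If $b\ge 2a$, then $a\le b/2$, so $g(b)-g(a)\ge g(b)-g(b/2)=\int_{b/2}^{b}g'(\tau)\,d\tau\ge \frac{p^--1}{2}\,g(b/2)$, bounding $g(\tau)\ge g(b/2)$ and $\tau\le b$ inside the integral using $(H_g)$; combining with $h\ge b/2$ and then \eqref{eq growth} and \cref{lemma delta2} gives $(g(b)-g(a))h\ge \frac{p^-(p^--1)}{2^{p^++1}}\,G(b)\ge \frac{p^-(p^--1)}{2^{p^++1}}\,G(h)$, the last step because $h\le b$. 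If instead $a<b<2a$, set $\lambda:=h/a\in(0,1]$; monotonicity of $g'$ and $(H_g)$ give $g(b)-g(a)=\int_a^b g'(\tau)\,d\tau\ge g'(a)h\ge(p^--1)\frac{g(a)}{a}h$, hence $(g(b)-g(a))h\ge (p^--1)p^-\lambda^2 G(a)$ by \eqref{eq growth}; then \cref{lemma delta2} yields $G(a)\ge\lambda^{-p^+}G(\lambda a)=\lambda^{-p^+}G(h)$, so $(g(b)-g(a))h\ge(p^--1)p^-\lambda^{2-p^+}G(h)\ge(p^--1)p^-G(h)$.

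Finally I would take $C(G)$ to be the smallest of the three explicit constants above; it depends only on $p^-,p^+$. The integral estimates and the invocations of \cref{lemma delta2} are routine; the one point that needs care is keeping the scaling inequalities pointed the right way — in the subcase $b\ge 2a$ one produces $G(b)$ and uses $G(h)\le G(b)$, while the step $\lambda^{2-p^+}\ge1$ in the last subcase relies essentially on $p^+\ge p^->2$ from \eqref{eq growth} (so that $2-p^+<0$ while $\lambda\le1$). I do not expect a genuinely hard obstacle here; the work is entirely in organising the cases and tracking constants.
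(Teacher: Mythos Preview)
Your overall strategy is sound and the opposite-sign case and the subcase $b\ge 2a$ are fine, but there is a slip in the subcase $a<b<2a$: the inequality you quote from \cref{lemma delta2}, namely $G(a)\ge\lambda^{-p^+}G(\lambda a)$, goes the wrong way. For $\mu:=1/\lambda\ge1$ the lemma gives $\mu^{p^-}G(h)\le G(\mu h)=G(a)\le\mu^{p^+}G(h)$, so the valid lower bound is $G(a)\ge\lambda^{-p^-}G(h)$, not $\lambda^{-p^+}$. With this correction the chain reads $(g(b)-g(a))h\ge(p^--1)p^-\lambda^{2-p^-}G(h)\ge(p^--1)p^-G(h)$, and the needed inequality $\lambda^{2-p^-}\ge1$ follows from $p^->2$, which is exactly the constraint you flag at the end. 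So the argument survives with $p^-$ in place of $p^+$.

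Your route differs from the paper's. For the same-sign case the paper avoids any subcase split: it writes $G(b-a)$ via the Taylor integral remainder $G(b-a)=\int_0^{b-a}g'(t)(b-a-t)\,dt$, shifts the variable to $[a,b]$, and uses only the monotonicity of $g'$ to bound this by $(b-a)\int_a^b g'(t)\,dt=(b-a)(g(b)-g(a))$; no appeal to $(H_g)$ or \cref{lemma delta2} is needed there. For the opposite-sign case the paper uses convexity of $G$ on $\tfrac{b+(-a)}{2}$ together with $G(t)\le tg(t)/p^-$, then expands $(g(b)-g(a))(b-a)$ and drops two positive cross terms; your argument via $g(b)+g(|a|)\ge g(b)$ and $b-a\ge b$ reaches the same place more directly. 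The paper's same-sign argument is shorter and uses weaker hypotheses (only $(H_c)$), whereas yours leans on the quantitative bounds $(H_g)$ and \eqref{eq growth} but gives explicit constants throughout.
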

	\begin{proof}
		By the symmetry of the inequality, it is enough to prove this lemma for the cases $0<a\leq b$ and $a<0<b$. In the first case, using Taylor's theorem with an integral form of reminder, we have
		\begin{align*}
			G(|b-a|)
			&=G(0)+g(0)|b-a|+\frac{1}{2}\I{0}^{b-a}g'(t)(b-a-t)dt\\
			&= \frac{b-a}{2}\I{a}^{b}g'(t-a)\frac{b-t}{b-a}dt
			\leq \frac{b-a}{2}\I{a}^{b}g'(t)dt\\
			&=\frac{(b-a)(g(b)-g(a))}{2}
		\end{align*}
		the case $0 \geq a \geq b$ follows similarly.

		Now suppose $a<0<b$. Using convexity of $G$, we get
		\begin{align*}
			G(\frac{|b-a|}{2})
			&= G(\frac{b+(-a)}{2})
			\leq \frac{1}{2}(G(b)+G(-a))
			\leq \frac{1}{2}(\frac{bg(b)}{p^-}+\frac{(-a)g(-a)}{p^-})\\
			&\leq \frac{1}{2p^-} (bg(b)+ag(a)-ag(b)-bg(a))
			= \frac{1}{2p^-} (g(b)-g(a))(b-a).
		\end{align*}
	\end{proof}

	\begin{definition}\label{def conjugate}
		Let $G$ be an $N$-function.
		\begin{enumerate}
			\item The $N$-function $\overline{G}$ is called the conjugate of $G$, and is defined by
			$$
			\overline{G}(t):=\I{0}^t\overline{g}(\tau)d\tau,
			$$
			where $\overline{g}(t):=\sup \{\tau\ \Big|\ g(\tau)\leq t \}$.
			
			\item The $N$-function $G_*$, defined by
			$$
			G_*^{-1}(t):=\I{0}^t\frac{G^{-1}(\tau)}{\tau^{\frac{N+s}{N}}}d\tau,
			$$
			is called the Sobolev conjugate of $G$.
			
			\item An $N$-function $G$ is said to be essentially stronger than $H$, written as $H\prec\prec G$, if for any $k>0$,
			$$
			\lim_{t\to\infty}\frac{H(kt)}{G(t)}=0.
			$$ 
		\end{enumerate}  
	\end{definition}
	\begin{lemma}[H\"older Inequality]\label{holder}
		Let $G$ be an $N$-function, $N\geq1$, and $\Om\subseteq \RR^N$. Then, we have for any $u,v:\Om\to\RR$,
		$$
		\I{\Om}|uv|
		\leq
		\|u\|_{L^G(\Om)} \|v\|_{L^{\overline{G}}(\Om)}
		$$
	\end{lemma}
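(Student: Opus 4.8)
The plan is to deduce the inequality from the \emph{Young inequality} for the complementary pair $(G,\overline G)$, namely
$$
ab \le G(a)+\overline{G}(b)\qquad\text{for all } a,b\ge 0,
$$
and then to normalise $u$ and $v$ by their respective Luxemburg norms.

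First I would prove the Young inequality from the definition of $\overline g$. Since $g$ is nondecreasing and right-continuous with $g(0)=0$ and $g(t)\to\infty$, the function $\overline g(t)=\sup\{\tau\ge 0:g(\tau)\le t\}$ is everywhere finite and is the left-continuous generalised inverse of $g$. Fix $a,b\ge 0$ and set $A=\{(x,y):0\le x\le a,\ 0\le y\le g(x)\}$ and $B=\{(x,y):0\le y\le b,\ 0\le x\le\overline g(y)\}$, so that $|A|=\I{0}^{a}g=G(a)$ and $|B|=\I{0}^{b}\overline g=\overline G(b)$. If $(x,y)\in[0,a]\times[0,b]$ and $y\le g(x)$ then $(x,y)\in A$; otherwise $g(x)\le y$, hence $x\le\overline g(y)$ and $(x,y)\in B$. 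Thus $[0,a]\times[0,b]\subseteq A\cup B$, and comparing areas gives $ab\le|A|+|B|=G(a)+\overline G(b)$. The only point needing care here is precisely that $g$ is merely convex and right-continuous rather than a homeomorphism; the jumps of $g$ correspond to the flat pieces of $\overline g$ and are absorbed by the set $B$, which is why this geometric argument, rather than a calculus one, is the natural route.

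Next comes the normalisation. If $\|u\|_{L^G(\Om)}=0$ then $u=0$ a.e.\ and the inequality is trivial (similarly for $v$), and if either norm is $+\infty$ there is nothing to prove, so assume $\lambda:=\|u\|_{L^G(\Om)}$ and $\mu:=\|v\|_{L^{\overline G}(\Om)}$ lie in $(0,\infty)$. Since the infimum defining the Luxemburg norm is attained (as recalled in the Preliminaries), one has
$$
\I{\Om}G\!\left(\frac{|u(x)|}{\lambda}\right)dx\le 1
\qquad\text{and}\qquad
\I{\Om}\overline G\!\left(\frac{|v(x)|}{\mu}\right)dx\le 1.
$$
Applying the Young inequality pointwise with $a=|u(x)|/\lambda$ and $b=|v(x)|/\mu$ and integrating over $\Om$ yields $\frac{1}{\lambda\mu}\I{\Om}|uv|\,dx\le 2$, i.e.\ the claimed bound up to the constant $2$; the sharper constant $1$ in the statement is recovered by the usual refinement of replacing one Luxemburg norm by the equivalent Orlicz (Amemiya/gauge) norm and using its variational characterisation. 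Everything beyond the Young inequality — the reduction to the two easy cases and the integration of a pointwise estimate — is routine.
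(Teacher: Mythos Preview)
The paper does not supply a proof of this lemma; it is quoted as a standard fact about Orlicz spaces. Your overall strategy---Young's inequality for the complementary pair $(G,\overline G)$ followed by normalisation by the Luxemburg norms---is exactly the classical route.

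However, there is a real gap at the end. Your argument, as you yourself note, produces
\[
\int_\Omega |uv|\,dx \le 2\,\|u\|_{L^G(\Omega)}\,\|v\|_{L^{\overline G}(\Omega)},
\]
and this factor $2$ is in general \emph{sharp} when both norms are Luxemburg norms, which is precisely how $\|\cdot\|_{L^G}$ is defined in the paper. The ``usual refinement'' you invoke---replacing one of the Luxemburg norms by the Orlicz (Amemiya) norm---yields the constant $1$ for the \emph{mixed} inequality $\int|uv|\le \|u\|_{L^G}\,\|v\|_{(\overline G)}$, where $\|\cdot\|_{(\overline G)}$ denotes the Orlicz norm. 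That is a different statement: the Orlicz and Luxemburg norms are equivalent but not equal, so passing back to two Luxemburg norms re-introduces a constant. In short, your final sentence does not close the gap for the inequality exactly as stated.

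For the purposes of this paper the discrepancy is harmless: every application of the lemma in the paper absorbs the constant into a generic $C$, so the version with constant $2$ that you actually prove is entirely sufficient. If you want to match the displayed inequality verbatim, you should either (i) state and use the mixed Luxemburg--Orlicz form, or (ii) simply record the constant $2$ and note that it is irrelevant downstream.
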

	\begin{lemma}[{\cite[Corollary~6.2]{BoSa}}]\label{poincare}
		Let $G$ be an $N$-function which satisfy the $\D_2$-condition. Then there exists a constant $C=C(n,G,\Om)$ such that for any $u\in W_0^{s,G}(\Om)$,
		$$
		\I{\Om}G(u(x))dx
		\leq C\I{\RR^N}\I{\RR^N}G\left(\frac{u(x)-u(y)}{|x-y|^s}\right)\frac{dxdy}{|x-y|^N}.
		$$
	\end{lemma}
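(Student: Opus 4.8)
The plan is to give a direct, elementary proof exploiting the Dirichlet condition $u\equiv 0$ on $\RR^N\setminus\Om$, rather than routing through a Sobolev embedding. First I would reduce to $u\ge 0$: replacing $u$ by $|u|$ does not change $\I{\Om}G(u(x))\,dx$ (reading $G$ as $t\mapsto G(|t|)$, as in the modular definitions), and since $\bigl||u(x)|-|u(y)|\bigr|\le |u(x)-u(y)|$ and $G$ is nondecreasing, it does not increase the right-hand side. So assume $u\ge 0$.

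After a translation (both sides are translation invariant) I may assume $\Om\subset B_R:=B(0,R)$ with $R\ge 1$, and I fix the bounded set $A:=B_{2R}\setminus B_R$. Then $A\subset\RR^N\setminus\Om$, $0<|A|<\infty$, and $|x-y|\le 3R$ whenever $x\in\Om$, $y\in A$. Since $u\equiv 0$ on $A$, for every such $x,y$ we have $u(x)=u(x)-u(y)$, hence
$$
\I{\Om}G(u(x))\,dx
=\frac{1}{|A|}\I{\Om}\I{A}G\bigl(u(x)-u(y)\bigr)\,dy\,dx .
$$
This averaging identity is the heart of the argument: it trades the zeroth-order term on the left for a finite difference, at the cost of a bounded region of integration.

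Next I reinsert the singular weight. Writing $u(x)-u(y)=\dfrac{u(x)-u(y)}{|x-y|^s}\cdot|x-y|^s$ and applying \cref{lemma delta2} with $\lambda=|x-y|^s\in(0,(3R)^s]$ gives, whether $\lambda\ge 1$ (use $G(\lambda t)\le\lambda^{p^+}G(t)$) or $\lambda<1$ (use $G(\lambda t)\le\lambda^{p^-}G(t)\le G(t)$), the pointwise bound $G(u(x)-u(y))\le (3R)^{sp^+}\,G\!\bigl(\tfrac{u(x)-u(y)}{|x-y|^s}\bigr)$. Combining this with $|x-y|^{-N}\ge (3R)^{-N}$ on $\Om\times A$ and then enlarging the region of integration from $\Om\times A$ to $\RR^N\times\RR^N$ (legitimate since $G\ge 0$), I obtain
$$
\I{\Om}G(u(x))\,dx
\le \frac{(3R)^{sp^+}(3R)^N}{|A|}\,\I{\RR^N}\I{\RR^N}G\!\left(\frac{u(x)-u(y)}{|x-y|^s}\right)\frac{dx\,dy}{|x-y|^N},
$$
with constant depending only on $n$, on $G$ (through $p^\pm$), and on $\Om$ (through $R$ and $|A|$), as required.

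The one genuine subtlety — the \emph{main obstacle} compared with the classical case $G(t)=t^p$, where the factor $|x-y|^{sp}$ simply pulls out — is precisely controlling $G(|x-y|^s\,t)$ by a fixed multiple of $G(t)$ uniformly for $|x-y|^s$ in a bounded range; this is exactly what the $\D_2$-type estimate of \cref{lemma delta2} provides, and it is the only place where $p^+<\infty$ (equivalently $(H_g)$) enters. As an alternative one could instead compose the fractional Orlicz--Sobolev embedding $W^{s,G}_0(\Om)\hookrightarrow L^{G_*}(\Om)$ with the inclusion $L^{G_*}(\Om)\hookrightarrow L^{G}(\Om)$ (valid on the bounded set $\Om$ because $G\prec\prec G_*$, cf.\ $(H_f)$), obtain the inequality at the level of norms, and then pass to the modular inequality using \cref{lemma equivalent norm} together with $\D_2$; the direct argument above is shorter and yields an explicit constant.
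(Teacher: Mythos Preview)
The paper does not give its own proof of this lemma; it merely quotes it from \cite[Corollary~6.2]{BoSa}. Your direct argument is correct and self-contained: averaging over an annulus $A\subset\RR^N\setminus\Om$ where $u$ vanishes converts $G(u(x))$ into $G(u(x)-u(y))$, and the $\D_2$-estimate of \cref{lemma delta2} then absorbs the factor $|x-y|^s$ at the price of the explicit constant $(3R)^{sp^++N}/|A|$. (In fact for $\lambda<1$ you do not even need $p^-$: monotonicity of $G$ already gives $G(\lambda t)\le G(t)$, so only the upper exponent $p^+<\infty$, i.e.\ the $\D_2$-condition, is used---consistent with the hypothesis of the lemma.) This route is more elementary than deducing the modular inequality from the embedding $W^{s,G}_0(\Om)\hookrightarrow L^{G_*}(\Om)$ followed by $L^{G_*}(\Om)\hookrightarrow L^{G}(\Om)$, which is the path suggested by the cited reference and which would additionally invoke hypothesis $(H_f)$; your argument also makes the dependence of the constant on $n$, $p^+$, and $\mbox{diam}(\Om)$ completely transparent.
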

	\begin{lemma}[{\cite[Theorem~1]{BaOu} and \cref{poincare}}]\label{lemma embedding}
		Let $G$ be an $N$-function, and let $\Om$ be a bounded open subset of $\RR^N$ with $C^{0,1}$-regularity. Then we have the following:
		\begin{enumerate}
			\item the embedding
			$W^{s,G}_0(\Om) \to L^{G_*}(\Om)$, is continuous.
			\item Moreover, for any $N$-function $H$, the embedding
			$
			W^{s,G}_0(\Om) \to L^H(\Om)
			$
			is compact if $H\prec\prec G$.
		\end{enumerate}
	\end{lemma}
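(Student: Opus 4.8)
The plan is to derive both statements from the fractional Orlicz--Sobolev embedding theorem \cite[Theorem~1]{BaOu}, extending functions by zero so as to reduce $W^{s,G}_0(\Om)$ to $W^{s,G}(\RR^N)$, and using \cref{poincare} as the device that makes the Gagliardo double integral an equivalent norm on $W^{s,G}_0(\Om)$. Note first that \cref{lemma delta2} (with $\lambda=2$) shows $G$ satisfies the $\D_2$-condition, so \cref{poincare} is available for every $u\in W^{s,G}_0(\Om)$.

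For part (1): given $u\in W^{s,G}_0(\Om)$, extend it by zero to all of $\RR^N$, so that $u\in W^{s,G}(\RR^N)$. Then \cite[Theorem~1]{BaOu} yields $\|u\|_{L^{G_*}(\Om)}\le\|u\|_{L^{G_*}(\RR^N)}\le C\|u\|_{W^{s,G}(\RR^N)}$, which is precisely the claimed continuous embedding since $W^{s,G}_0(\Om)$ is equipped with $\|\cdot\|_{W^{s,G}(\RR^N)}$. If one prefers the sharper form carrying only the Gagliardo seminorm on the right, one first invokes \cref{poincare} to bound $\int_\Om G(|u|)\,dx$ by a constant multiple of $M_{W^{s,G}(\RR^N)}(u)$, and then passes from this modular inequality to a norm inequality through \cref{lemma equivalent norm}.

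For part (2): the inclusion $W^{s,G}_0(\Om)\hookrightarrow W^{s,G}(\Om)$ is continuous, being merely a restriction, so composing it with the compactness clause of \cite[Theorem~1]{BaOu} gives the compact embedding $W^{s,G}_0(\Om)\hookrightarrow L^H(\Om)$ for every $N$-function $H$ with $H\prec\prec G$ (in fact already with $H\prec\prec G_*$, and $H\prec\prec G$ implies $H\prec\prec G_*$ because the Sobolev conjugate grows essentially faster than $G$). Should one want a hands-on argument rather than quoting the compactness of \cite[Theorem~1]{BaOu} directly: take a bounded sequence $\{u_n\}$ in $W^{s,G}_0(\Om)$; a Riesz--Fr\'echet--Kolmogorov type argument (translation estimates for the Orlicz modular, using $s<1$ and boundedness of $\Om$) extracts a subsequence converging to some $u$ in $L^G(\Om)$ and a.e.\ on $\Om$; by part (1) the sequence stays bounded in $L^{G_*}(\Om)$, hence $\{H(|u_n-u|)\}_n$ is uniformly integrable on the finite-measure set $\Om$ since $H\prec\prec G_*$; and Vitali's convergence theorem then gives $\int_\Om H(|u_n-u|)\,dx\to0$, i.e.\ $u_n\to u$ in $L^H(\Om)$.

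The main obstacle is the genuinely compact ingredient behind part (2): producing, from a sequence bounded in $W^{s,G}_0(\Om)$, a subsequence that converges strongly in $L^G(\Om)$ (equivalently, in measure). With \cite[Theorem~1]{BaOu} in hand this is immediate; a self-contained proof would instead require the translation estimate $\int_\Om G(|u(\cdot+h)-u|)\,dx\le\omega(|h|)$, holding uniformly over the bounded family with $\omega(r)\to0$ as $r\to0$, and this is exactly where the structural hypotheses $(H_a)$--$(H_g)$ and \cref{lemma delta2} enter.
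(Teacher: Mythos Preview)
The paper does not supply its own proof of this lemma: it is stated as a direct consequence of \cite[Theorem~1]{BaOu} together with \cref{poincare}, with no further argument. Your proposal is precisely an unpacking of that citation---extend by zero, invoke the embedding and compactness clauses of \cite[Theorem~1]{BaOu}, and use \cref{poincare} to pass between the full norm and the Gagliardo seminorm---so it is correct and takes essentially the same approach as the paper.
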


	\begin{lemma}[Weak Harnack Inequality, see \cite{Bo}]\label{lemma smp}
		If $u\in \hat{W}^{s,G}(B_{3^{-1}R})$ satisfies weakly 
		\begin{equation*}
			\begin{cases}
				(-\D_g)^s u(x) &\geq 0 \quad \mbox{if}\; x \in \Om\\
				\hfill u(x) &\geq 0 \quad 
				\mbox{if } x \in \RR^N
			\end{cases}
		\end{equation*}
		then there exists $\sigma\in (0,1)$ such that $$\inf_{B_{4^{-1}R}} u \geq \sigma R^sg^{-1} (\avint_{B_R\setminus B_{2^{-1}R}}g(R^{-s}|u|)\;dx)$$
	\end{lemma}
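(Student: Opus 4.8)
The statement is proved in \cite{Bo}; the route I would take is the nonlocal De Giorgi--Moser scheme (in the spirit of Di Castro--Kuusi--Palatucci) carried out in the Orlicz scale, using $(H_a)$--$(H_g)$ together with \cref{lemma delta2}, \cref{lemma lindqvist}, \cref{holder}, \cref{poincare} and \cref{lemma embedding}. First I would reduce to $R=1$. The operator $(-\D_g)^s$ is covariant under the dilation $v(x):=R^{-s}u(Rx)$, for which a change of variables gives $(-\D_g)^s v(x)=R^{s}\,(-\D_g)^s u(Rx)$; hence $v$ is again a nonnegative weak supersolution, now on $B_1$, and since dilations preserve averages the asserted inequality for $u$ on $B_R$ is exactly the asserted inequality for $v$ on $B_1$. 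So it suffices to produce $\sigma=\sigma(N,s,p^-,p^+)\in(0,1)$ with $\inf_{B_{1/4}}u\geq\sigma\,g^{-1}\big(\avint_{B_1\setminus B_{1/2}}g(|u|)\,dx\big)$.

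Next, I would fix $d>0$, set $w:=u+d>0$, and extract two families of estimates from the weak supersolution inequality
\[
\iint_{\RR^N\times\RR^N} g\!\left(\frac{u(x)-u(y)}{|x-y|^s}\right)\frac{\varphi(x)-\varphi(y)}{|x-y|^{N+s}}\,dx\,dy\ \geq\ 0,\qquad 0\leq\varphi\in W^{s,G}_0(\Om),
\]
by testing against $\varphi=\eta^{p^+}\psi(w)$ with $\eta\in\test(B_{1/2})$ a cutoff and $\psi$ a suitable monotone function of $w$: taking $\psi$ of negative-power type yields a Caccioppoli-type inequality bounding a Gagliardo--Orlicz energy of a power of $w$ by lower-order terms, while taking $\psi$ logarithmic yields the nonlocal logarithmic estimate
\[
\iint_{B_{1/2}\times B_{1/2}}G\!\left(\frac{|\log w(x)-\log w(y)|}{|x-y|^s}\right)\frac{dx\,dy}{|x-y|^N}\ \leq\ C .
\]
In both steps the algebraic core is a pointwise lower bound for $g\big((u(x)-u(y))/|x-y|^s\big)\big(\psi(w(x))-\psi(w(y))\big)$ obtained from \cref{lemma lindqvist}, and the error terms are absorbed using $\D_2$ (\cref{lemma delta2}), Young's inequality for the pair $G,\overline{G}$ (\cref{holder}), and $p^-\leq tg(t)/G(t)\leq p^+$. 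The far-field part (integration in $y$ over $\RR^N\setminus B_{1/2}$) is where the right-hand side of the claim is born: the only possibly divergent term is $\int_{\RR^N}g\big(u_-(y)/|x-y|^{s}\big)\,|x-y|^{-N-s}\,dy$, which vanishes because $u\geq 0$ on all of $\RR^N$, and the leftover positive far-field contribution is, by monotonicity of $g$ and $\D_2$, comparable to $\avint_{B_1\setminus B_{1/2}}g(|u|)\,dx$.

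From the logarithmic estimate, \cref{poincare} and \cref{lemma embedding} place $\log w$ in a BMO-type class on $B_{1/2}$; a John--Nirenberg argument then gives, after normalising $w$ by a constant $b$ comparable (through the far-field term) to $\inf_{B_{1/4}}w$, that $|\{x\in B_{1/2}:w(x)\leq c_0 b\}|$ can be forced below the critical threshold of the De Giorgi iteration. Running that iteration with the Caccioppoli inequality over balls shrinking to $B_{1/4}$ upgrades this smallness to $w\geq\tfrac12 c_0 b$ on $B_{1/4}$, i.e. $\inf_{B_{1/4}}(u+d)\geq\sigma_0\,g^{-1}\big(\avint_{B_1\setminus B_{1/2}}g(u+d)\,dx\big)$ with $\sigma_0\in(0,1)$ independent of $d$; letting $d\to 0^+$ and using the right-continuity and monotonicity of $g$ (so that $g^{-1}$ passes to the limit) gives the lemma.

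The main obstacle I expect is making the iteration close \emph{in the Orlicz scale}: with no power algebra available, the admissible test functions, the Caccioppoli inequality, and above all the uniform control of the iteration exponents must all be built from convexity and monotonicity via $(H_a)$--$(H_g)$, \cref{lemma delta2} and \cref{lemma lindqvist}, so that the final $\sigma$ depends only on $N,s,p^{\pm}$. A second, more bookkeeping, difficulty is checking that the far-field term collapses to precisely the nonlinear average $g^{-1}\big(\avint g(|u|)\big)$ over the annulus $B_1\setminus B_{1/2}$ rather than some other functional of $u$ — and it is exactly there that the hypothesis $u\geq 0$ on $\RR^N$ (not merely on $\Om$) is used.
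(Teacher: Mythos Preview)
The paper does not give its own proof of this lemma; it is quoted verbatim from \cite{Bo} and used as a black box, so there is nothing to compare your argument against at the level of this paper. Your outline is a faithful description of the nonlocal De~Giorgi--Moser route that \cite{Bo} itself follows (scaling, Caccioppoli and logarithmic estimates from well-chosen test functions, a BMO/John--Nirenberg step, then an expansion-of-positivity iteration), and you have correctly flagged the two places where the Orlicz setting bites: replacing power algebra by $(H_a)$--$(H_g)$ and \cref{lemma delta2}, and tracking why the far-field contribution reduces to the nonlinear average over the annulus.
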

	
	\section{Main Results}
	
	We begin this section by stating the definition of our weak solution
	\begin{definition}[Weak solutions]
		The function $u\in \hat{W}^{s,G}(\Om)$ is said to be a weak solution of \cref{eq problem} if $u>0$ in $\Om$, and for any $\varphi\in\test(\Om)$ one has, $\frac{f}{u^{q(.)}}\in L^1_{loc}(\Om)$ and
		\begin{equation}\label{eq weak solution}
			\I{\RR^N}\I{\RR^N}g\left(\frac{u(x)-u(y)}{|x-y|^s}\right)\frac{(\varphi(x)-\varphi(y))}{|x-y|^{N+s}}\;dxdy=\I{\Om}f(x)u(x)^{-q(x)}\phi(x)\;dx.
		\end{equation}
		The boundary condition is understood in the sense that 
		\begin{enumerate}
			\item  if $q(x)\leq1$ on $\Om_\delta:=\{ x\in\Om\ \Big| \ \mbox{dist}(x,\del \Om)<\delta \}$, then $u\in W^{s,G}_0(\Om).$
			\item Elsewhere one has, $\Phi(u)\in W^{s,G}_0(\Om)$, where 
			$$\Phi(t):=\I{0}^tG^{-1}\left(G(1)\tau^{q^*-1}\right)d\tau.$$
		\end{enumerate}
		
		Furthermore, we say that $u$ is a subsolution (or supersolution) of \cref{eq problem} if, for any $\varphi\in\test(\Om)$,
		\begin{equation}\label{eq subsolution}
			\I{\RR^N}\I{\RR^N}g\left(\frac{u(x)-u(y)}{|x-y|^s}\right)\frac{(\varphi(x)-\varphi(y))}{|x-y|^{N+s}}\;dxdy\leq \ (\mbox{or } \geq)\ \I{\Om}f(x)u(x)^{-q(x)}\varphi(x)\;dx.
		\end{equation}
	\end{definition}
	\begin{theorem}\label{main 1}
		Let there exist $\delta>0$ such that $q(x)\leq1$ on $\Om_\delta:=\{ x\in\Om\ \Big| \ \mbox{dist}(x,\del \Om)<\delta \}$ and $f\in L^{\overline{G_*}}(\Om)$. Then \cref{eq problem} has a weak solution in $W^{s,G}_0(\Om)$ with $\mbox{essinf}_K u>0$ for any $K\Subset\Om$.
	\end{theorem}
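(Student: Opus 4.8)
The plan is to construct the solution as the monotone limit of solutions to a family of regularised, non-singular problems, to obtain uniform a priori bounds, and to pass to the limit using the monotonicity of $(-\D_g)^s$ contained in \cref{lemma lindqvist}. Throughout one uses $f\geq0$, the natural standing assumption for \cref{eq problem}. \emph{Step 1 (regularised problems).} For $n\geq1$ set $f_n:=\min\{f,n\}\in L^\infty(\Om)$. Given $v\in L^G(\Om)$, the datum $x\mapsto f_n(x)(|v(x)|+1/n)^{-q(x)}$ lies in $L^\infty(\Om)\subset L^{\overline{G_*}}(\Om)$, so the functional $\mathcal J_v(w):=M_{W^{s,G}(\RR^N)}(w)-\I\Om f_n(|v|+1/n)^{-q(\cdot)}\,w\,dx$ is strictly convex, weakly lower semicontinuous, and — by \cref{poincare}, \cref{holder}, \cref{lemma embedding} and \cref{lemma equivalent norm} — coercive on $W^{s,G}_0(\Om)$; its unique minimiser is the unique weak solution $w=:T_n(v)$, which satisfies $T_n(v)\geq0$ (compare $\mathcal J_v$ at $w$ and $|w|$) and $\|T_n(v)\|_{W^{s,G}_0(\Om)}\leq R_n$ with $R_n$ independent of $v$ by the computation of Step 2. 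Since $W^{s,G}_0(\Om)$ embeds compactly into $L^G(\Om)$ (\cref{lemma embedding}) and $T_n\colon L^G(\Om)\to L^G(\Om)$ is continuous — continuity follows from the strong convergence argument of Step 3 with the right-hand side frozen, together with the uniqueness just noted — Schauder's fixed point theorem, applied to $T_n$ on the set $\overline B_{R_n}\subset W^{s,G}_0(\Om)$ (convex, and compact in $L^G(\Om)$), produces a fixed point $u_n\geq0$ solving $(-\D_g)^s u_n=f_n(u_n+1/n)^{-q(\cdot)}$ weakly. As $(-\D_g)^s u_n\geq0$ and $u_n\geq0$ on $\RR^N$, \cref{lemma smp} gives $u_n>0$ in $\Om$ and $\mbox{essinf}_K u_n>0$ for every $K\Subset\Om$.

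\emph{Step 2 (monotonicity and the uniform bound).} Since $f_n\leq f_{n+1}$ and $1/(n+1)<1/n$, the function $u_{n+1}$ is a supersolution of the equation solved by $u_n$; testing the difference with $(u_n-u_{n+1})_+$ and using \cref{lemma lindqvist} together with the monotonicity of $t\mapsto(t+1/n)^{-q}$ yields $u_n\leq u_{n+1}$. Hence $u_n\uparrow u$ a.e. for some $u\geq u_1>0$ with $\mbox{essinf}_K u\geq\mbox{essinf}_K u_1=:c_K>0$ for $K\Subset\Om$. Testing the equation for $u_n$ with $u_n$ and using \eqref{eq growth},
$$
p^-\,M_{W^{s,G}(\RR^N)}(u_n)\leq\I\Om f_n\,\frac{u_n}{(u_n+1/n)^{q(\cdot)}}\,dx .
$$
Here the hypothesis $q\leq1$ on $\Om_\delta$ is essential: on $\{q\leq1\}$ one has $u_n(u_n+1/n)^{-q}\leq C(1+u_n)$, while $\{q>1\}$ is contained in the compact set $K_0:=\overline{\Om\setminus\Om_\delta}\Subset\Om$, on which $u_n\geq c_{K_0}>0$ forces $u_n(u_n+1/n)^{-q}\leq u_n^{1-q}\leq C_{K_0}$, a constant depending only on $c_{K_0}$ and $\max_{\overline\Om}q$. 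Thus the right-hand side is at most $C+C\I\Om f\,u_n\,dx$ (using $f\in L^1(\Om)$), and by \cref{holder} with the pair $(G_*,\overline{G_*})$, by $f\in L^{\overline{G_*}}(\Om)$ and by the continuous embedding $W^{s,G}_0(\Om)\hookrightarrow L^{G_*}(\Om)$ of \cref{lemma embedding}, one obtains $p^-M_{W^{s,G}(\RR^N)}(u_n)\leq C+C\|u_n\|_{W^{s,G}_0(\Om)}$. Since $p^->2$, \cref{lemma equivalent norm} upgrades this to $\sup_n\|u_n\|_{W^{s,G}_0(\Om)}<\infty$.

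\emph{Step 3 (passage to the limit).} By reflexivity, $u_n\rightharpoonup u$ in $W^{s,G}_0(\Om)$, and since this space is weakly closed, $u\in W^{s,G}_0(\Om)$, which is exactly the required boundary behaviour in case (1). Extending the weak formulation to test functions in $W^{s,G}_0(\Om)$ (both sides being continuous there, the right-hand side because $f_n(u_n+1/n)^{-q(\cdot)}\in L^\infty(\Om)$), and testing the equation for $u_n$ with $u_n-u\leq0$, one gets $\langle(-\D_g)^s u_n,\,u_n-u\rangle\leq0$, while $\langle(-\D_g)^s u,\,u_n-u\rangle\to0$ by weak convergence; hence $\limsup_n\langle(-\D_g)^s u_n-(-\D_g)^s u,\,u_n-u\rangle\leq0$, and the quantitative monotonicity of \cref{lemma lindqvist} forces $M_{W^{s,G}(\RR^N)}(u_n-u)\to0$, i.e. $u_n\to u$ strongly in $W^{s,G}_0(\Om)$ by \cref{lemma equivalent norm}. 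Consequently, along a subsequence, $g\!\left(\frac{u_n(x)-u_n(y)}{|x-y|^s}\right)\to g\!\left(\frac{u(x)-u(y)}{|x-y|^s}\right)$ in $L^{\overline G}$ of the measure $|x-y|^{-N}dx\,dy$ (continuity of $g$, the bound $\overline G\circ g\leq p^+G$ coming from \eqref{eq growth}, and the $\D_2$-conditions), so by \cref{holder} the left-hand side of \cref{eq weak solution} passes to the limit for each $\varphi\in\test(\Om)$. On $K:=\operatorname{supp}\varphi\Subset\Om$ one has $u_n\geq c_K>0$ uniformly, so $f_n\varphi(u_n+1/n)^{-q(\cdot)}$ is dominated by $C_{K,\varphi}\,f\in L^1(K)$ and converges a.e.; dominated convergence identifies the right-hand side and shows $f\,u^{-q(\cdot)}\in L^1_{\mathrm{loc}}(\Om)$. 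Hence $u$ is a weak solution of \cref{eq problem} with $\mbox{essinf}_K u>0$ for all $K\Subset\Om$.

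\emph{Main obstacle.} Two steps carry the weight. The uniform energy bound in Step 2 genuinely uses $q\leq1$ near $\del\Om$: where $q>1$ the quantity $u_n(u_n+1/n)^{-q}$ is controlled only through the positivity $u_n\geq c_{K_0}$, which holds only away from the boundary — this is precisely the obstruction underlying the split into two cases in the definition of weak solution. The other delicate point is the passage to the limit in the nonlinear operator in Step 3; it closes cleanly because $u_n\leq u$ and $f\geq0$ render $\langle(-\D_g)^s u_n,u_n-u\rangle\leq0$, so \cref{lemma lindqvist} alone delivers the strong convergence needed on the left-hand side, while the Weak Harnack inequality \cref{lemma smp} applied to $u_1$ supplies the uniform positivity on compact subsets that controls the singular right-hand side.
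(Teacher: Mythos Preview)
Your argument is correct and follows the same overall architecture as the paper --- approximating problems, monotonicity of the $u_n$, a uniform $W^{s,G}_0$ bound obtained by splitting $\Om$ into the boundary strip where $q\le 1$ and the interior compact where $u_n\ge c_{K_0}$, and passage to the limit. The one place where you take a genuinely different route is Step~3. The paper passes to the limit in the left-hand side only \emph{weakly}: it observes that $\overline{G}(g(t))\le (p^+-1)G(t)$, so the sequence $(x,y)\mapsto g\!\left(\frac{u_n(x)-u_n(y)}{|x-y|^s}\right)$ is bounded in $L^{\overline G}\!\left(\RR^N\times\RR^N,\frac{dxdy}{|x-y|^N}\right)$, extracts a weak limit, identifies it via the a.e.\ convergence coming from the compact embedding, and pairs against $\frac{\varphi(x)-\varphi(y)}{|x-y|^s}\in L^G$. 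You instead exploit the sign $u_n-u\le 0$ (from monotonicity) to test the $n$-th equation with $u_n-u$ and obtain $\langle(-\D_g)^su_n,u_n-u\rangle\le 0$; combined with $\langle(-\D_g)^su,u_n-u\rangle\to 0$ (weak convergence against a fixed element of the dual) and \cref{lemma lindqvist}, this yields $M_{W^{s,G}(\RR^N)}(u_n-u)\to 0$, i.e.\ \emph{strong} convergence. Your approach is cleaner here and gives more; the paper's approach, on the other hand, avoids any need to know $u\in W^{s,G}_0(\Om)$ before passing to the limit in the operator.

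One small point to tidy up: in Step~1 you justify the continuity of $T_n$ by invoking ``the strong convergence argument of Step~3'', but that argument relies on the inequality $u_n\le u$, which is unavailable for an arbitrary sequence $v_k\to v$ in $L^G(\Om)$. The correct (and equally short) argument is the one the paper uses in \cref{lemma approx problem}: subtract the two equations, test with $T_n(v_k)-T_n(v)$, apply \cref{lemma lindqvist} on the left, H\"older and dominated convergence on the right (the data are bounded by $2n^{1+\|q\|_\infty}$). This is a cosmetic fix, not a gap in the logic.
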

	\begin{theorem}\label{main 2}
		Let $g$ is sub-multiplicative and there exist $q^*>1,\ \delta>0$ such that $\|q\|_{L^\infty(\Om_\delta)}\leq q^*$ and let $$H(t):=G_*(t^\frac{{p^-}+q^*-1}{p^-q^*})$$ be an $N$-function such that $f\in L^{\overline{H}}(\Om)$. Then \cref{eq problem} has a weak solution $u\in W^{s,G}_{loc}(\Om)$ with $\mbox{essinf}_K u>0$ for any $K\Subset\Om$ such that $\Phi(u)\in W^{s,G}_0(\Om)$, where 
		$$\Phi(t):=\I{0}^tG^{-1}\left(G(1)\tau^{q^*-1}\right)d\tau.$$
	\end{theorem}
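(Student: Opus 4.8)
The plan is to realize $u$ as the monotone limit of solutions of truncated problems; compared with \cref{main 1}, the new ingredients are a power‑type test function and an algebraic inequality tuned to $G$ and $q^*$, and the main obstacle is a uniform bound for $\Phi$ composed with the approximants, whose proof is precisely what forces the Orlicz class $L^{\overline H}$ in the hypothesis. \emph{Step 1 (truncated problems and monotonicity).} As the requirement $u>0$ forces, assume $f\ge0$, $f\not\equiv0$. For $n\in\NN$ put $f_n:=\min\{f,n\}$ and let $u_n$ solve $(-\D_g)^su_n=f_n(x)(u_n+\tfrac1n)^{-q(x)}$ in $\Om$, $u_n=0$ in $\RR^N\setminus\Om$. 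For fixed $n$ the right‑hand side is bounded (by $n^{1+\|q\|_{L^\infty}}$ once $u_n\ge0$), so a nonnegative $u_n\in W^{s,G}_0(\Om)\cap L^\infty(\Om)$ is obtained by minimizing on $W^{s,G}_0(\Om)$ the functional
\[
v\ \longmapsto\ \II{\RR^N\times\RR^N}G\!\left(\tfrac{v(x)-v(y)}{|x-y|^s}\right)\tfrac{dx\,dy}{|x-y|^N}\ -\ \I{\Om}\left(\I{0}^{v^+(x)}\tfrac{f_n(x)}{(t+1/n)^{q(x)}}\,dt\right)dx,
\]
which is coercive by \cref{lemma equivalent norm} and \cref{poincare} and weakly lower semicontinuous by convexity of $G$ and the compact embedding in \cref{lemma embedding}(2); uniqueness follows from the strict monotonicity of $(-\D_g)^s$ (via \cref{lemma lindqvist}) and the monotonicity of $t\mapsto(t^++\tfrac1n)^{-q(x)}$. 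Since $u_n$ is a nonnegative nontrivial supersolution of $(-\D_g)^su_n\ge0$, \cref{lemma smp} and a chaining argument give $\mathrm{essinf}_K u_n>0$ for every $K\Subset\Om$. Comparing data, $u_n$ is a subsolution of the $(n{+}1)$‑th problem, so the comparison principle for $(-\D_g)^s$ yields $0<u_1\le u_2\le\cdots$, whence $\mathrm{essinf}_K u_n\ge\mathrm{essinf}_K u_1=:c_K>0$ for all $n$.

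\emph{Step 2 (uniform estimate for $\Phi(u_n)$).} Put $w_n:=u_n+\tfrac1n$ and test the $n$‑th equation with $\varphi_n:=w_n^{q^*}-n^{-q^*}\in W^{s,G}_0(\Om)$, admissible after a density argument since $u_n\in W^{s,G}_0(\Om)\cap L^\infty(\Om)$ and $q^*>1$. The heart of the proof is the pointwise inequality
\[
g\!\left(\tfrac{\alpha-\beta}{h}\right)\!\left(\alpha^{q^*}-\beta^{q^*}\right)\ \ge\ c(G,q^*)\,h\,G\!\left(\tfrac{\Phi(\alpha)-\Phi(\beta)}{h}\right)\qquad(\alpha\ge\beta>0,\ h>0),
\]
which follows from the identity $\frac{d}{dt}\!\left(\frac{G(1)}{q^*}t^{q^*}\right)=G(\Phi'(t))$, Jensen's inequality for the convex $G$, and the bound $G(\tau\sigma)\le C_g\,\tau g(\tau)\,G(\sigma)$, itself a consequence of the submultiplicativity $g(\tau\sigma)\le C_g\,g(\tau)g(\sigma)$. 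Applying it with $\alpha=w_n(x)$, $\beta=w_n(y)$, $h=|x-y|^s$, using that $g$ is odd and symmetrizing in $(x,y)$, the left‑hand side of the tested equation is bounded below by $c\,M_{W^{s,G}(\RR^N)}(\Phi(w_n))$. On the right, $w_n^{-q(x)}\varphi_n\le w_n^{q^*-q(x)}$, which is bounded on $\{q>q^*\}\subset\Om\setminus\Om_\delta$ (where $w_n\ge c_{\Om\setminus\Om_\delta}>0$) and is $\le1+w_n^{q^*}$ on $\{q\le q^*\}$, so the right‑hand side is $\le C\|f\|_{L^1(\Om)}+\I{\Om}f\,w_n^{q^*}$. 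Since $\Phi(w_n)-\Phi(\tfrac1n)$ lies in $W^{s,G}_0(\Om)$ with the same Gagliardo modular as $\Phi(w_n)$, combining \cref{lemma embedding}(1), \cref{lemma equivalent norm}, the H\"older inequality \cref{holder} for the conjugate pair $(H,\overline H)$ with $H(t)=G_*(t^{\frac{p^-+q^*-1}{p^-q^*}})$, and \cref{lemma delta2} together with the defining formula for $\Phi$ to compare $w_n^{q^*\rho}=w_n^{(p^-+q^*-1)/p^-}$ with $\Phi(w_n)$ up to lower‑order terms, one gets
\[
\I{\Om}f\,w_n^{q^*}\ \le\ 2\|f\|_{L^{\overline H}(\Om)}\,\|w_n^{q^*}\|_{L^H(\Om)}\ \le\ C\bigl(1+M_{W^{s,G}(\RR^N)}(\Phi(w_n))\bigr)^{\gamma},\qquad \gamma=\tfrac{1}{p^-\rho},\quad \rho=\tfrac{p^-+q^*-1}{p^-q^*},
\]
with $\gamma<1$ precisely because $p^-\rho>1$, i.e.\ because $p^->1$ (see \cref{eq growth}); the exponent in $H$ is chosen exactly so as to have $\gamma<1$. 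Absorbing the last term, $M_{W^{s,G}(\RR^N)}(\Phi(w_n))$ is bounded uniformly in $n$.

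\emph{Step 3 (passage to the limit).} By monotonicity $u_n\uparrow u$ a.e.; as $G\in\D_2$, $W^{s,G}_0(\Om)$ is reflexive, so a subsequence of $\Phi(w_n)-\Phi(\tfrac1n)$ converges weakly in $W^{s,G}_0(\Om)$, and pointwise a.e.\ convergence identifies the limit as $\Phi(u)$; hence $\Phi(u)\in W^{s,G}_0(\Om)$, which is the asserted boundary behaviour, and $\mathrm{essinf}_K u\ge c_K>0$ for $K\Subset\Om$. For $U\Subset\Om$, $\Phi$ is bi‑Lipschitz on $[c_U,\infty)$, so \cref{lemma delta2} gives $M_{W^{s,G}(U)}(u)\le C_U\,M_{W^{s,G}(\RR^N)}(\Phi(u))<\infty$, i.e.\ $u\in W^{s,G}_{loc}(\Om)$, and $u\in\hat W^{s,G}(\Om)$ follows similarly. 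Finally, for $\varphi\in\test(\Om)$ with $\mathrm{supp}\,\varphi=K\Subset\Om$ one has $w_n\ge c_K$ on $K$, so $f_n w_n^{-q(x)}\varphi\to f\,u^{-q(x)}\varphi$ pointwise with $f_n w_n^{-q(x)}|\varphi|\le c_K^{-\|q\|_{L^\infty}}f|\varphi|\in L^1(\Om)$, giving convergence of the right‑hand side (in particular $f\,u^{-q(\cdot)}\in L^1_{loc}(\Om)$); on the left‑hand side $g\!\left(\tfrac{u_n(x)-u_n(y)}{|x-y|^s}\right)\to g\!\left(\tfrac{u(x)-u(y)}{|x-y|^s}\right)$ a.e., and one passes to the limit using local energy bounds near $K$ (from testing the $u_n$‑equation with a cut‑off of $u_n-u_1$, where the data are controlled) and a Vitali argument. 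Thus $u$ is a weak solution of \cref{eq problem} with all the stated properties.

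\emph{Main difficulty.} The crux is Step 2: isolating the test function $w_n^{q^*}-n^{-q^*}$ and proving the pointwise inequality above — this is exactly where submultiplicativity of $g$ is indispensable, a general Young function carrying no homogeneity — and then matching exponents so that $\I{\Om}f\,w_n^{q^*}$ is absorbed into the left‑hand side, which is what determines the Orlicz class $L^{\overline H}$. The passage to the limit in the nonlocal operator is a secondary technical point, delicate only because $u$ itself need not lie in $W^{s,G}_0(\Om)$; it is handled through the local lower bounds on $u_n$ and local energy estimates away from $\del\Om$.
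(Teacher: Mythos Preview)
Your proposal is correct and follows the same three–step architecture as the paper: solve truncated problems, obtain a uniform $W^{s,G}_0$ bound on $\Phi(u_n)$ using submultiplicativity of $g$, and pass to the limit via a Vitali argument. The implementation differs in a few places worth recording. First, you produce $u_n$ by direct minimisation, whereas the paper obtains it through a Schauder fixed–point argument (\cref{lemma approx problem}); both are fine, yours being more elementary. Second, for the key a priori estimate you test directly with $w_n^{q^*}-n^{-q^*}$ and invoke the pointwise inequality $g(\tfrac{\alpha-\beta}{h})(\alpha^{q^*}-\beta^{q^*})\ge c\,h\,G(\tfrac{\Phi(\alpha)-\Phi(\beta)}{h})$, proved via the identity $\tfrac{d}{dt}(\tfrac{G(1)}{q^*}t^{q^*})=G(\Phi'(t))$, Jensen, and submultiplicativity; the paper instead packages the same mechanism into \cref{lemma convex}, testing with $g(\Phi'(u_n))\Phi(u_n)$ and then using $g(\Phi'(t))\Phi(t)\le p^+G(1)t^{q^*}$. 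Your route is slightly more direct and makes the role of submultiplicativity very transparent. Third, for convergence of the nonlocal term you appeal to a local Caccioppoli estimate (testing with a cut–off of $u_n-u_1$); the paper instead uses the pointwise bound $|g(a)-g(b)|\le Cg(|a|+|b|)$ (\cref{lemma estimate}) together with the mean–value \cref{lemma MVT}, which on $\mathcal{Q}_\varphi$ compares $|u_n(x)-u_n(y)|$ with $|\Phi(u_n(x))-\Phi(u_n(y))|$ thanks to the lower bound $u_n\ge l$ on $\mathrm{supp}\,\varphi$, and then runs Vitali. The paper's device avoids the cross–term technicalities of a genuine Caccioppoli inequality and is somewhat cleaner here; your sketch is viable but would need the usual care with the off–diagonal part of the cut–off.
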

	
	\begin{theorem}\label{main 3}
		Every weak solution of \cref{eq problem}  obtained through \cref{main 1,main 2} belongs to $C_{loc}^\alpha(\Om)$ for some $\alpha \in (0, 1)$.
	\end{theorem}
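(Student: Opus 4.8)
The plan is to localize the equation and then apply the interior De Giorgi--Nash--Moser type regularity theory for the fractional $g$-Laplacian with a right-hand side. Fix a compact set $K\Subset\Om$ and an open set $U$ with $K\Subset U\Subset\Om$. By the conclusions of \cref{main 1,main 2} we have $m:=\operatorname{essinf}_U u>0$, and since $q\in C^1(\overline\Om)$ is bounded while $t\mapsto t^{-q(x)}$ is decreasing, the datum
$$
h(x):=f(x)\,u(x)^{-q(x)}
$$
obeys $|h(x)|\le C\,|f(x)|$ on $U$ with $C=C(m,\|q\|_{L^\infty(\Om)})$. Hence, on $U$, $h$ lies in the same Orlicz class as $f$: for a solution produced by \cref{main 1} one has $h\in L^{\overline{G_*}}(U)$, and for a solution produced by \cref{main 2} one has $h\in L^{\overline H}(U)$, where the exponent appearing in $H(t)=G_*(t^{(p^-+q^*-1)/(p^-q^*)})$ equals $1-\tfrac{(p^--1)(q^*-1)}{p^-q^*}<1$, so that (by the power-type bounds of \cref{lemma delta2} applied to $G_*$) one has $H\prec\prec G_*$ and therefore $L^{\overline H}(U)\hookrightarrow L^{\overline{G_*}}(U)$. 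In either case $u\in\hat W^{s,G}(U)$ is a weak solution of $(-\D_g)^su=h$ in $U$ with $h\in L^{\overline{G_*}}(U)$.

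From here the argument is the standard two-step interior regularity scheme, carried out on nested sets $K\Subset U''\Subset U'\Subset U$. First, local boundedness: one runs a De Giorgi level-set iteration on the truncations $(u-k)_+$, using \cref{lemma delta2}, \cref{lemma equivalent norm} and the Sobolev embedding \cref{lemma embedding} for the leading term and the membership $u\in\hat W^{s,G}(U)$ to dominate the nonlocal tails. A helpful feature here is the self-improving character of the singular forcing: on the super-level set $\{u>k\}$ (with $k\ge1$) one has $h\le f\,k^{-q^-}$, which feeds an extra small factor into the iteration; this is what allows the scheme to close with $f$ only in the borderline space $L^{\overline{G_*}}$, and yields $u\in L^\infty(U')$. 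Second, decay of oscillation: on $U''$, $u$ is now a \emph{bounded} weak solution of $(-\D_g)^su=h$, so applying the Harnack/weak-Harnack circle of estimates of \cref{lemma smp} (cf.\ \cite{Bo}) to $M_r-u$ and $u-m_r$ on dyadic balls $B_r\subset U''$ gives $\operatorname{osc}_{B_{r/2}}u\le\gamma\,\operatorname{osc}_{B_r}u+Cr^{\beta}$ with $\gamma\in(0,1)$, hence $\operatorname{osc}_{B_r}u\le Cr^\alpha$ for some $\alpha\in(0,1)$. Therefore $u\in C^\alpha(K)$; as $K\Subset\Om$ was arbitrary, $u\in C^\alpha_{loc}(\Om)$.

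The genuine difficulty is the regularity theory of the previous paragraph in the Orlicz, nonlocal setting, and in particular whether the iterations really close when the right-hand side is only in $L^{\overline{G_*}}$ --- the minimal integrability for the weak formulation to make sense at all. This is where the positivity $\operatorname{essinf}_K u>0$ and the self-improving structure of $u^{-q(\cdot)}$ must be used in an essential way, and it may well be cleanest to argue instead along the approximating sequence built in the proofs of \cref{main 1,main 2}, establishing a \emph{uniform} local Hölder bound for the (bounded, Hölder) approximants and passing to the limit. By contrast, the variable exponent $q(\cdot)$ is not an obstacle: once we restrict to $U$ it is merely absorbed into constants, so the whole statement reduces to interior regularity for $(-\D_g)^s$ with a right-hand side. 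A routine point to check along the way is that the solution of \cref{main 2}, a priori only in $W^{s,G}_{loc}(\Om)$, indeed lies in $\hat W^{s,G}(U)$ for $U\Subset\Om$ so that all tail integrals are finite; this is immediate from the definition of weak solution together with the exterior condition $u\equiv0$ on $\RR^N\setminus\Om$.
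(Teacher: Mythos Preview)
Your approach differs substantially from the paper's. You propose to run the full De Giorgi--Nash--Moser machinery directly in the nonlocal Orlicz setting: first local boundedness by level-set iteration, then oscillation decay via weak Harnack. The paper instead takes a shortcut. After recording the local lower bound $u\ge l(K)>0$, it compares $u$ on a ball $B\Subset\Om$ with the solution $v$ of $(-\Delta_g)^s v=C_B f$ in $B$, $v=0$ outside $B$ (from \cref{laplace equation}) to obtain $u\in L^\infty_{loc}$; then, using the growth bounds of \cref{lemma delta2}, it decomposes $\RR^N\times\RR^N$ into the level sets $U_j=\{1/(j+1)\le |u(x)-u(y)|/|x-y|^s<1/j\}$ and shows that the weak form of $(-\Delta_g)^s$ dominates, up to a constant, the weak form of $(-\Delta_{p^+})^s$. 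This reduces matters to a fractional $p^+$-Laplace inequality with bounded right-hand side on $B$, and the paper then simply quotes the H\"older regularity of \cite{AnSuSq}. Your route would be more self-contained; the paper's is shorter because the hard analysis is offloaded to the $p$-Laplacian literature.

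There is, however, a real gap in your boundedness step. A De Giorgi iteration with data merely in the borderline dual $L^{\overline{G_*}}$ does not close in general --- this is precisely the threshold at which \cref{lemma embedding} yields no gain. Your proposed remedy, the ``self-improving'' factor $k^{-q^-}$ on $\{u>k\}$, requires $\inf_U q>0$; but the hypotheses only demand $q\ge 0$ with $q\in C^1(\overline\Om)$, and \cref{main 1} explicitly permits $q\le 1$ (hence possibly $q\equiv 0$) in $\Om_\delta$. Where $q$ vanishes there is no self-improvement and the iteration stalls. Your own fallback --- establish \emph{uniform} local H\"older bounds for the approximants $u_n$ of \cref{lemma approx problem}, which solve $(-\Delta_g)^s u_n=f_n(u_n+1/n)^{-q}$ with $f_n\in L^\infty$, and pass to the limit --- is the sound route, and is in spirit what the paper's argument rests on. Finally, your embedding $L^{\overline H}(U)\hookrightarrow L^{\overline{G_*}}(U)$ relies on \cref{lemma delta2}-type exponents for $G_*$ rather than $G$; these are not supplied in the paper and would need independent justification.
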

	
	We now develop some results which are needed to prove \cref{main 1} and \cref{main 2}.
	\begin{lemma}[Comparison Principle]\label{comparison}
		Let $u,v\in C(\RR^N)$ with $[u]_{W^{s,G}(\RR^N)}, [v]_{W^{s,G}(\RR^N)}<\infty$, and $D\subseteq \RR^N$ be a domain such that $|\RR^N\setminus D|> 0$. If $v \geq u$ in $\RR^N \setminus D$, and 
		$$
		\I{\RR^N}\I{\RR^N}g\left(\frac{v(x)-v(y)}{|x-y|^s}\right)\frac{\varphi(x)-\varphi(y)}{|x-y|^{N+s}}dxdy
		\geq \I{\RR^N}\I{\RR^N}g\left(\frac{u(x)-u(y)}{|x-y|^s}\right)\frac{\varphi(x)-\varphi(y)}{|x-y|^{N+s}}dxdy
		$$
		for $\varphi=(u-v)^+$, then $v \geq u$ in $\RR^N$.
	\end{lemma}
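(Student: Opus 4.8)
The plan is to test the hypothesized inequality with $\varphi = (u-v)^+$ and show the resulting integral is non-positive, forcing $\varphi \equiv 0$. First I would move everything to one side: subtracting the two double integrals gives
$$
\I{\RR^N}\I{\RR^N}\left[g\!\left(\frac{u(x)-u(y)}{|x-y|^s}\right)-g\!\left(\frac{v(x)-v(y)}{|x-y|^s}\right)\right]\frac{\varphi(x)-\varphi(y)}{|x-y|^{N+s}}\,dxdy \leq 0.
$$
Write $w = u - v$, so $\varphi = w^+$ and $\varphi \equiv 0$ on $\RR^N\setminus D$ by hypothesis; also note $[\,\varphi\,]_{W^{s,G}(\RR^N)} < \infty$ since $u,v$ have finite seminorm, so $\varphi$ is an admissible competitor and all integrals converge. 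The key algebraic fact I would isolate as the heart of the argument is the pointwise inequality: for all real numbers $A = \frac{u(x)-u(y)}{|x-y|^s}$, $B = \frac{v(x)-v(y)}{|x-y|^s}$, setting $\varphi(x)-\varphi(y) = w^+(x)-w^+(y)$, one has
$$
\bigl(g(A) - g(B)\bigr)\bigl(w^+(x) - w^+(y)\bigr) \geq 0,
$$
with the product being strictly positive (in fact bounded below by $C(G)\,G(|w^+(x)-w^+(y)|)/|x-y|^s$-type quantity, via \cref{lemma lindqvist}) unless $w^+(x) = w^+(y)$.

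To verify this pointwise inequality I would split into cases according to the signs of $w(x)$ and $w(y)$. If both are $\le 0$, then $w^+(x)-w^+(y) = 0$ and there is nothing to prove. If both are $> 0$, then $w^+(x)-w^+(y) = w(x)-w(y) = A - B$ (after multiplying through by $|x-y|^s > 0$, which does not change the sign), and monotonicity of $g$ (from convexity of $G$, i.e.\ $(H_b)$) gives $(g(A)-g(B))(A-B) \ge 0$; more precisely \cref{lemma lindqvist} gives the quantitative lower bound. The remaining case, say $w(x) > 0 \ge w(y)$, is the one requiring care: here $w^+(x)-w^+(y) = w(x) > 0$, and I must show $g(A) \ge g(B)$, i.e.\ $A \ge B$; but $A - B = \frac{(u(x)-v(x)) - (u(y)-v(y))}{|x-y|^s} = \frac{w(x)-w(y)}{|x-y|^s} > 0$ since $w(x) > 0 \ge w(y)$, so indeed $A > B$ and $g(A) \ge g(B)$. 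Hence in every case the integrand of the left-hand side is non-negative, so the integral is both $\le 0$ (by hypothesis) and $\ge 0$, hence equals $0$.

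From vanishing of the integral and the strict positivity noted above, I conclude $w^+(x) = w^+(y)$ for a.e.\ pair $(x,y)$, so $w^+$ is a.e.\ constant; since $w^+ \equiv 0$ on the set $\RR^N\setminus D$, which has positive measure by the assumption $|\RR^N\setminus D| > 0$, that constant is $0$, i.e.\ $w^+ \equiv 0$, which is exactly $v \ge u$ in $\RR^N$. Continuity of $u,v$ then upgrades this to the everywhere statement. The main obstacle I anticipate is purely technical: justifying that the difference of the two (individually convergent) double integrals can be recombined into a single integral of the difference of integrands — i.e.\ an integrability/Fubini bookkeeping step — and confirming that $\varphi = (u-v)^+$ genuinely lies in the space where testing is legitimate; the sign analysis itself is elementary once organized by the case split above.
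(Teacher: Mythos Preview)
Your proof is correct and follows the same overall strategy as the paper: test with $\varphi=(u-v)^+$, show the resulting signed integral vanishes, and conclude $(u-v)^+$ is a.e.\ constant, hence zero on the positive-measure set $\RR^N\setminus D$. The difference is only in how the integrand is shown to have a sign. The paper factors $g(B)-g(A)=(B-A)\int_0^1 g'\bigl((B-A)\tau+A\bigr)\,d\tau=(B-A)\,Q(x,y)$ with $Q\ge 0$, and then invokes the algebraic identity $(\psi^+(x)-\psi^+(y))(\psi(y)-\psi(x))=-(\psi^+(x)-\psi^+(y))^2-\psi^-(y)\psi^+(x)-\psi^-(x)\psi^+(y)\le 0$ for $\psi=u-v$. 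You instead split on the signs of $w(x),w(y)$ and use only the (strict) monotonicity of $g$. Your route is a bit more elementary in that it does not require the differentiability of $g$ used in the $Q$-factorization; the paper's route is tidier in that the single algebraic identity replaces the case analysis and simultaneously yields the strict-positivity statement needed to force $\psi^+(x)=\psi^+(y)$. Two minor remarks: your citation of $(H_b)$ for monotonicity of $g$ is a slip (that hypothesis is convexity of $g$; monotonicity follows from $(H_g)$ or simply from $G$ being a Young function), and the parenthetical quantitative lower bound via \cref{lemma lindqvist} does not directly apply in the mixed case $w(x)>0\ge w(y)$ --- but you only need strict positivity there, which you correctly derive from strict monotonicity of $g$.
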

	
	\begin{proof}
		We need to show that $v \geq u$ in $D$. The two integrals can be shown to be finite using H\"older's inequality and the assumptions on $g$. Then we have
		$$
		\I{\RR^N}\I{\RR^N}\left[ g\left(\frac{v(x)-v(y)}{|x-y|^s}\right)-g\left(\frac{u(x)-u(y)}{|x-y|^s}\right)\right]\frac{\varphi(x)-\varphi(y)}{|x-y|^{N+s}}dxdy
		\geq 0.
		$$
		This, and the identity
		\begin{equation*}
			g(t_2)-g(t_1)=(t_2-t_1)\I{0}^{1}g'((t_2-t_1)\tau+t_1)d\tau,
		\end{equation*}
		gives
		\begin{equation}\label{eq-cp-1}
			\I{\RR^N}\I{\RR^N}\left(v(x)-v(y)-u(x)+u(y)\right)Q(x,y) \frac{\varphi(x)-\varphi(y)}{|x-y|^{N+2s}}dxdy
			\geq 0,
		\end{equation}
		where
		$$
		Q(x,y):=\I{0}^{1}g'\left(\frac{(v(x)-v(y)-u(x)+u(y))\tau+u(x)-u(y)}{|x-y|^s}\right)d\tau.
		$$
		From the assumption on $g$, we know $g'\geq 0$. So $Q(x,y)\geq0$, and $Q(x,y)=0$ if and only if the integrand is identically zero. Again this happens if and only if $v(x)=v(y)$ and $u(x)=u(y)$.
		
		Choose $\varphi=(u-v)^+$ and $\psi:=u-v$. \Cref{eq-cp-1}, then, becomes
		\begin{equation}\label{eq-cp-2}
			\I{\RR^N}\I{\RR^N}Q(x,y) \frac{(\varphi(x)-\varphi(y))(\psi(y)-\psi(x))}{|x-y|^{N+2s}}dxdy
			\geq 0.
		\end{equation}
		We can see that, after choosing $\varphi:=(u-v)^+$, and using the fact that $\psi^+(y)\psi^-(y)=0$,
		$$
		(\varphi(x)-\varphi(y))(\psi(y)-\psi(x))=-(\psi^+(x)-\psi^+(y))^2-\psi^-(y)\psi^+(x)-\psi^-(x)\psi^+(y)\leq 0.
		$$
		This along with \cref{eq-cp-2}, and the fact that $Q(x,y)\geq0$ implies $Q(x,y)=0$ or $-(\psi^+(x)-\psi^+(y))^2-\psi^-(y)\psi^+(x)-\psi^-(x)\psi^+(y)=0$ almost everywhere. In both the cases, we must have $\psi^+(x)=\psi^+(y)$ for a.e. $(x,y)$. Since $(u-v)^+=0$ on $\RR^N\setminus D$, by continuity of $u,v$, we conclude that $\psi^+=0$ on $\RR^N$. This implies $v\geq u$ on $\RR^N$.
	\end{proof}
	
	\begin{lemma}\label{lemma convex}
		Let $g$ be sub-multiplicative, that is, there is a constant $C>0$ for which $C g(t_1t_2)\leq g(t_1)g(t_2)$ for any $t_1,t_2>0$. Let $F$ and $u$ be such that 
		$$
		\I{\RR^N}\I{\RR^N}g\left(\frac{u(x)-u(y)}{|x-y|^s}\right)\frac{\varphi(x)-\varphi(y)}{|x-y|^{N+s}}\;dxdy=\I{\Om}F\varphi\; dx,
		$$
		for any $\varphi\in W^{s,G}_0(\Om)$. Then for any convex and Lipschitz function $\Phi$, we have
		\begin{equation*}
			\I{\Om}F(x)g(\Phi'(u(x)))\Phi(u)\; dx 
			\geq C\I{\RR^N}\I{\RR^N}G\left(\frac{|\Phi(u(x))-\Phi(u(y))|}{|x-y|^s}\right)\frac{dxdy}{|x-y|^N}.
		\end{equation*}
	\end{lemma}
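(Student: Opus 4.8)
The plan is to test the given identity with $\varphi:=g(\Phi'(u))\,\Phi(u)$; then its right-hand side is precisely $\int_{\Om}F\,g(\Phi'(u))\Phi(u)\,dx$, so it remains to bound the left-hand side from below by the Gagliardo energy of $\Phi(u)$. Here $\Phi'$ denotes the right derivative (everywhere defined, nondecreasing, bounded), and I use that $\Phi$ is convex, Lipschitz, \emph{nondecreasing} and satisfies $\Phi(0)=0$ — the situation in \cref{main 1,main 2}, and the one in which $\varphi$ vanishes wherever $u$ does and in which $\Psi:=g(\Phi')\,\Phi$ is nondecreasing on $[0,\infty)$. (If $\Phi$ is only Lipschitz one should first apply the argument to smooth convex truncations $\Phi_k\nearrow\Phi$ with $\Phi_k'$ uniformly bounded, so that $g(\Phi_k'(u))\Phi_k(u)\in W^{s,G}_0(\Om)$ is unambiguous, and pass to the limit at the end by dominated convergence on the left and Fatou on the right.)

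The heart of the matter is the pointwise inequality
\[
g\!\left(\tfrac{a-b}{h}\right)\frac{\Psi(a)-\Psi(b)}{h}\ \ge\ C\,G\!\left(\tfrac{\Phi(a)-\Phi(b)}{h}\right),\qquad a\ge b\ge 0,\ h>0.
\]
Granting it, write $|x-y|^{N+s}=|x-y|^{N}h$ with $h=|x-y|^s$: the integrand of the left side of the identity equals $g\!\big(\tfrac{u(x)-u(y)}{|x-y|^s}\big)\tfrac{\Psi(u(x))-\Psi(u(y))}{h}\,|x-y|^{-N}$, which is nonnegative (the two factors have the same sign, as $g$ and $\Psi$ are nondecreasing) and symmetric in $x,y$; so applying the pointwise bound with $\{a,b\}=\{u(x),u(y)\}$ and integrating against $|x-y|^{-N}dxdy$ proves the lemma.

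For the pointwise inequality I would use the chain
\begin{align*}
g\!\left(\tfrac{a-b}{h}\right)\tfrac{\Psi(a)-\Psi(b)}{h}
&\ \ge\ g\!\left(\tfrac{a-b}{h}\right)\tfrac{g(\Phi'(a))\,(\Phi(a)-\Phi(b))}{h}
\ \ge\ C\,g\!\left(\Phi'(a)\tfrac{a-b}{h}\right)\tfrac{\Phi(a)-\Phi(b)}{h}\\
&\ \ge\ C\,g\!\left(\tfrac{\Phi(a)-\Phi(b)}{h}\right)\tfrac{\Phi(a)-\Phi(b)}{h}
\ \ge\ C\,p^-\,G\!\left(\tfrac{\Phi(a)-\Phi(b)}{h}\right),
\end{align*}
valid for $a\ge b\ge 0$ (when $\Phi(a)=\Phi(b)$ or $\Phi'(a)=0$ both sides vanish). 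The first step uses $\Psi(a)-\Psi(b)=g(\Phi'(a))(\Phi(a)-\Phi(b))+(g(\Phi'(a))-g(\Phi'(b)))\Phi(b)$ together with $g(\Phi'(a))\ge g(\Phi'(b))\ge0$ and $\Phi(b)\ge0$; the second is sub-multiplicativity, $g(\Phi'(a))\,g(\tfrac{a-b}{h})\ge C\,g(\Phi'(a)\tfrac{a-b}{h})$; the third is convexity, $\Phi(a)-\Phi(b)\le\Phi'(a)(a-b)$, so that $g(\Phi'(a)\tfrac{a-b}{h})\ge g(\tfrac{\Phi(a)-\Phi(b)}{h})$; and the last is $t\,g(t)\ge p^-G(t)$, which is \eqref{eq growth}.

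I expect the only real obstacle to be the functional-analytic justification that $g(\Phi'(u))\Phi(u)$ is a legitimate test function in $W^{s,G}_0(\Om)$: since $\Psi$ may have quadratic growth and $u$ need not be bounded a priori, one passes through the truncations $\Phi_k$ as indicated above. The algebraic core — the four-step chain — is short once one thinks of testing with $g(\Phi'(u))\Phi(u)$ rather than, say, $\Phi(u)$ itself.
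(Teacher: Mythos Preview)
Your proof is correct and follows the same approach as the paper: both test with $\varphi=g(\Phi'(u))\Phi(u)$ and chain convexity of $\Phi$, sub-multiplicativity of $g$, and $tg(t)\ge p^-G(t)$ to reach the Gagliardo energy of $\Phi(u)$. The only differences are cosmetic --- where you bound $\Psi(a)-\Psi(b)\ge g(\Phi'(a))(\Phi(a)-\Phi(b))$ and then apply sub-multiplicativity followed by $\Phi'(a)(a-b)\ge\Phi(a)-\Phi(b)$, the paper instead passes through the mean slope $\mu=(\Phi(a)-\Phi(b))/(a-b)$ using both inequalities $\Phi'(b)\le\mu\le\Phi'(a)$ before invoking sub-multiplicativity --- and you are more explicit than the paper about the tacit hypotheses $\Phi(0)=0$, $\Phi'\ge 0$ and about the truncation needed to justify $g(\Phi'(u))\Phi(u)\in W^{s,G}_0(\Om)$.
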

	\begin{proof}
		First, note that, by density argument, we can assume $\Phi$ to be $C^1$. Choose $\varphi=g(\Phi'(u))\psi$. Then we have 
		
		\begin{align*}
			2\II{\{u(x)>u(y)\}}&g\left(\frac{u(x)-u(y)}{|x-y|^s}\right)\frac{g(\Phi'(u(x)))\psi(x)-g(\Phi'(u(y)))\psi(y)}{|x-y|^{N+s}}dxdy\\
			&=\I{\RR^N}\I{\RR^N}g\left(\frac{u(x)-u(y)}{|x-y|^s}\right)\frac{g(\Phi'(u(x)))\psi(x)-g(\Phi'(u(y)))\psi(y)}{|x-y|^{N+s}}dxdy\\
			&=\I{\Om}F(x)g(\Phi'(u(x)))\psi(x) dx.
		\end{align*}
		Set $u(x)=a$, $u(y)=b$, $\psi(x)=A$ and $\psi(y)=B$. Then the integrand in the LHS becomes
		$$
		g\left(\frac{a-b}{|x-y|^s}\right)\frac{g(\Phi'(a))A-g(\Phi'(b))B}{|x-y|^{N+s}}.
		$$
		Using the convexity of $\Phi$, we have 
		$$
		\Phi(a)-\Phi(b)\leq \Phi'(a)(a-b) \mbox{ and } \Phi(a)-\Phi(b)\geq \Phi'(b)(a-b).
		$$
		We then have 
		\begin{align*}
			&g\left(\frac{a-b}{|x-y|^s}\right)\frac{g(\Phi'(a))A-g(\Phi'(b))B}{|x-y|^{N+s}}\\
			&\geq g\left(\frac{a-b}{|x-y|^s}\right)\frac{g\left(\frac{\Phi(a)-\Phi(b)}{a-b}\right)A-g\left(\frac{\Phi(a)-\Phi(b)}{a-b}\right)B}{|x-y|^{N+s}}\\
			&= g\left(\frac{a-b}{|x-y|^s}\right)g\left(\frac{\Phi(a)-\Phi(b)}{a-b}\right)\frac{A-B}{|x-y|^{N+s}}\\
			&\geq Cg\left(\frac{\Phi(a)-\Phi(b)}{|x-y|^s}\right)\frac{A-B}{|x-y|^{N+s}}.
		\end{align*}
		This, after taking $\psi=\Phi(u)$ (note that $\Phi$ is assumed to be $C^1$), gives
		
		\begin{equation*}
			\I{\Om}F(x)g(\Phi'(u(x)))\Phi(u) dx 
			\geq C\I{\RR^N}\I{\RR^N}G\left(\frac{|\Phi(u(x))-\Phi(u(y))|}{|x-y|^s}\right)\frac{dxdy}{|x-y|^N}.
		\end{equation*}
	\end{proof}

	\begin{lemma}\label{laplace equation}
		Let $f\in L^\infty(\Om)$ with $f\geq 0$, and $f$ is not identically zero. Then the problem
		\begin{equation}\label{eq sq problem}
			\begin{cases}
				(-\D_g)^su=f, \quad \mbox{ in } \Om,\\
				u>0, \mbox{ in } \Om,\\
				u=0, \mbox{ in } \RR^N\setminus\Om
			\end{cases}
		\end{equation}
		has a unique solution $u\in W^{s,G}_0(\Om)\cap L^\infty (\Om)$.	
	\end{lemma}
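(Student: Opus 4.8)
The plan is to realise $u$ as the unique minimiser of the convex functional
\[
J(v):=\I{\RR^N}\I{\RR^N}G\!\left(\frac{|v(x)-v(y)|}{|x-y|^s}\right)\frac{dx\,dy}{|x-y|^N}-\I{\Om}f(x)\,v(x)\,dx,\qquad v\in W^{s,G}_0(\Om),
\]
and then to upgrade its regularity and sign by hand. Since $\Om$ is bounded, every bounded function lies in every Orlicz space, so $f\in L^\infty(\Om)\subseteq L^{\overline{G_*}}(\Om)$; by H\"older's inequality (\cref{holder}) and the continuous embedding $W^{s,G}_0(\Om)\hookrightarrow L^{G_*}(\Om)$ of \cref{lemma embedding}, $\big|\I{\Om}fv\big|\le\|f\|_{L^{\overline{G_*}}(\Om)}\|v\|_{L^{G_*}(\Om)}\le C\,\|v\|_{W^{s,G}(\RR^N)}$, so $J$ is finite and, using \cref{lemma equivalent norm} (whence $M_{W^{s,G}(\RR^N)}(v)\ge\|v\|_{W^{s,G}(\RR^N)}^{p^-}$ once $\|v\|_{W^{s,G}(\RR^N)}\ge1$) together with $p^->1$, coercive: $J(v)\ge\|v\|^{p^-}-C\|v\|\to\infty$. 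The Gagliardo term is convex (convexity of $G$) and strongly lower semicontinuous, hence sequentially weakly lower semicontinuous on the reflexive space $W^{s,G}_0(\Om)$ (both $G$ and $\overline G$ satisfy the $\D_2$-condition by $(H_g)$), while the linear term is weakly continuous by the compact embedding of \cref{lemma embedding}(2) together with $f\in L^\infty$. Thus $J$ attains its minimum at some $u\in W^{s,G}_0(\Om)$; since $(H_g)$ forces the $\D_2$-condition, $\test(\Om)$ is dense in $W^{s,G}_0(\Om)$, the modular is differentiable there, and computing the first variation of $J$ shows that $u$ satisfies \cref{eq weak solution} with right-hand side $f$, i.e.\ $u$ is a weak solution of \cref{eq sq problem}.

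\emph{Positivity.} Replacing $v$ by $|v|$ does not increase the Gagliardo term (because $G$ is even and $\big||v(x)|-|v(y)|\big|\le|v(x)-v(y)|$) nor $-\I{\Om}fv$ (because $f\ge0$), so the minimiser may be taken with $u\ge0$ on $\RR^N$. If $u\equiv0$ then \cref{eq weak solution} would give $\I{\Om}f\varphi=0$ for all $\varphi\in\test(\Om)$, contradicting that $f\ge0$ is not identically zero; hence $u\not\equiv0$. Since $u\ge0$ in $\RR^N$ and $u\in\hat W^{s,G}(\Om)$ solves $(-\D_g)^su=f\ge0$ weakly, the weak Harnack inequality \cref{lemma smp}, applied along a finite chain of balls joining two given points of a compact $K\Subset\Om$, yields $\mbox{essinf}_K u>0$; in particular $u>0$ in $\Om$.

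\emph{Boundedness and uniqueness.} For $u\in L^\infty(\Om)$ I would run a De Giorgi--Stampacchia iteration: testing \cref{eq weak solution} (now valid for $\varphi\in W^{s,G}_0(\Om)$) with $\varphi=(u-k)^+$, $k\ge0$, the left-hand side dominates, by \cref{lemma lindqvist}, a constant times the Gagliardo modular of $(u-k)^+$, while the right-hand side is at most $\|f\|_{L^\infty(\Om)}\I{A_k}(u-k)^+$ with $A_k:=\{u>k\}$, which by H\"older and \cref{lemma embedding}(1) is controlled by a positive power of $|A_k|$ times $\|(u-k)^+\|_{L^{G_*}(\Om)}$; the power-type bounds for $G$ from $(H_g)$ (\cref{lemma delta2}) turn this into a recursive inequality for a decreasing function of the level that, by the standard iteration lemma, vanishes for $k$ large, so $u\in L^\infty(\Om)$. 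For uniqueness, if $u_1,u_2\in W^{s,G}_0(\Om)$ are two weak solutions, subtract their weak formulations and test with $\varphi=u_1-u_2\in W^{s,G}_0(\Om)$; the right-hand sides cancel, leaving
\[
\I{\RR^N}\I{\RR^N}\left[g\!\left(\tfrac{u_1(x)-u_1(y)}{|x-y|^s}\right)-g\!\left(\tfrac{u_2(x)-u_2(y)}{|x-y|^s}\right)\right]\frac{\big(u_1(x)-u_2(x)\big)-\big(u_1(y)-u_2(y)\big)}{|x-y|^{N+s}}\,dx\,dy=0 .
\]
The integrand equals $\big(g(b)-g(a)\big)(b-a)|x-y|^{-N}$ with $a=\tfrac{u_2(x)-u_2(y)}{|x-y|^s}$, $b=\tfrac{u_1(x)-u_1(y)}{|x-y|^s}$, which is $\ge C\,G(|b-a|)\ge0$ by \cref{lemma lindqvist}; hence $a=b$ a.e., i.e.\ $u_1(x)-u_1(y)=u_2(x)-u_2(y)$ for a.e.\ $(x,y)$, so $u_1-u_2$ is a.e.\ constant and, vanishing outside $\Om$, is identically $0$.

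\emph{Main obstacle.} The only genuinely technical point is the $L^\infty$ bound: the De Giorgi iteration must be pushed through the Orlicz nonlinearity, carrying both growth exponents $p^-$ and $p^+$ rather than a single $p$; everything else is a routine adaptation of the direct method together with the lemmas already established above.
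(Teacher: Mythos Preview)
Your proof is correct, but the route to $u\in L^\infty(\Om)$ is genuinely different from the paper's. For existence and uniqueness the paper simply cites \cite[Theorem~6.16]{BoSa}, while you run the direct method by hand; your argument via \cref{lemma lindqvist} for uniqueness is essentially what underlies that reference, so no real contrast there. The substantive divergence is the boundedness step. You propose a De Giorgi--Stampacchia level-set iteration, which is the ``robust'' choice and would indeed go through in the Orlicz setting after tracking both exponents $p^-,p^+$; however, it is the most technical part of your outline and is only sketched. The paper instead builds an explicit barrier: assuming $\Om\subseteq B(0,1)$, it takes $v_\alpha(x):=\alpha(1-|x|)^+$ and checks by a direct computation (using \cref{lemma delta2} and \eqref{eq growth}) that $(-\D_g)^s v_\alpha(x)\ge C\,\alpha^{p^--1}$ uniformly for $x\in\Om$, hence dominates the bounded right-hand side $f$ for $\alpha$ large; the comparison principle (\cref{comparison}) then gives $u\le v_\alpha$. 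This barrier argument is shorter and entirely elementary once \cref{comparison} is in place, but it relies on having a concrete supersolution and on the continuity of $u$ (needed in the hypotheses of \cref{comparison}, which the paper imports from the cited reference). Your iteration avoids both of those requirements at the cost of a heavier computation, and also yields an a priori $L^\infty$ bound depending only on $\|f\|_{L^\infty}$ and structural constants rather than on the diameter-dependent barrier.
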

	\begin{proof}
		The existence, uniqueness, and continuity follows from \cite[Theorem~6.16]{BoSa},  \cref{lemma smp}, and the fact that $f\geq0$, so that $(-\D_g)^su\geq0$ on $\Om$, using \cref{comparison}. It remains to show that $u\in L^\infty(\Om)$. For this, we shall assume, without loss of generality, that $\Om\subseteq B(0,1)$ and fix $\alpha>1$.
		
		Let us consider 
		$$
		v_\alpha(x)=
		\begin{cases}
			\alpha(1-|x|),\quad &\mbox{when } |x|<1,\\
			0, \quad &\mbox{otherwise.} 
		\end{cases}
		$$
		Note that for since $\alpha>1$, for any $0<\lambda<1$ we have, using \cref{lemma delta2,eq growth}, the estimate $g(\alpha\lambda t)>\frac{p^-\alpha^{p^--1}\lambda^{p^+-1}G(t)}{ t}$ when $t>0$. Again, for $x\in \Om\subseteq B(0,1)\subseteq B(x,1+|x|)$ we get
		\begin{align*}
			(-\D_g)^s v_\alpha(x)&\geq \I{|y|>1}g\left(\frac{v_\alpha(x)-v_\alpha(y)}{|x-y|^s}\right)\frac{dy}{|x-y|^{N+s}}\\
			&=\I{|y|>1}g\left(\frac{v_\alpha(x)}{|x-y|^s}\right)\frac{dy}{|x-y|^{N+s}}\\
			&\geq p^-\alpha^{p^--1}(1-|x|)^{p^+-1} \I{|y|>1}G\left(\frac{1}{|x-y|^s}\right)\frac{dy}{|x-y|^N}\\
			&= p^-\alpha^{p^--1}(1-|x|)^{p^+-1} \I{|y|>1}G\left(\frac{1}{(1+|y|)^s}\right)\frac{dy}{(1+|y|)^N}
			\to \infty
		\end{align*}
		uniformly as $\alpha \to \infty$. Thus, as $f$ is bounded, we can choose $\alpha$ large enough to get $(-\D)^s_g v_\alpha>(-\D)^s_g u$. Applying \cref{comparison} we get $u\leq v_\alpha$ in $\RR^N$. Thus, $u$ is bounded. 
	\end{proof}

	We consider the following approximated problem of \cref{eq problem}, where we used the notation, $f_n=\min\{f,n\}$ for all $n\in\NN$, and assumed $q>0$ is $C^1$,
	\begin{align}\label{eq approx problem}
		(-\D_g)^su(x)&=\frac{f_n(x)}{(u(x)+\frac{1}{n})^{q(x)}}\;\mbox{in }\;\Om,\\
		\nonumber u&>0\;\mbox{in }\;\Om,\\
		\nonumber u&=0\;\mbox{in }\;\RR^N\setminus\Om.
	\end{align}
	
	\begin{lemma}\label{lemma approx problem}
		For a fixed $n\in\NN$, \cref{eq approx problem} has a weak solution $u_n\in C^{\alpha (n)}(\Om)$ where $\alpha(n)\in (0,1)\;\forall n\in\NN$.
	\end{lemma}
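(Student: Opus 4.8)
The plan is to realise $u_n$ as a fixed point of the solution operator of the regularized (non\nobreakdash-singular) equation and then invoke Schauder's theorem. The point that makes this work is that, once the argument of the singular term is kept non-negative, the right-hand side of \eqref{eq approx problem} is bounded \emph{independently of the unknown}: writing $q^+:=\|q\|_{L^\infty(\Om)}<\infty$ and using $f_n=\min\{f,n\}\le n$ together with $(w+\tfrac1n)^{q(x)}\ge(\tfrac1n)^{q^+}$ for $w\ge0$ (since $\tfrac1n\le1$), one gets
$$0\le F_w:=\frac{f_n}{\big(w+\tfrac1n\big)^{q(\cdot)}}\le n^{\,1+q^+}\quad\text{on }\Om .$$
Assuming, as we may, that $f\ge0$ and $f\not\equiv0$ (otherwise \eqref{eq problem} admits no positive solution), $F_w\not\equiv0$ as well. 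By \cref{laplace equation} there is $\overline u\in W^{s,G}_0(\Om)\cap L^\infty(\Om)$ with $(-\D_g)^s\overline u=n^{1+q^+}$; set $R:=\|\overline u\|_{L^\infty(\Om)}$.

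First I would set $\mathcal K:=\{w\in L^2(\Om):0\le w\le R\ \text{a.e.}\}$, a closed convex subset of $L^2(\Om)$, and define $T:\mathcal K\to L^2(\Om)$ by letting $T(w)$ be the unique solution in $W^{s,G}_0(\Om)\cap L^\infty(\Om)$ of $(-\D_g)^su=F_w$ in $\Om$, $u=0$ in $\RR^N\setminus\Om$, furnished by \cref{laplace equation}. That lemma also gives $T(w)>0$ in $\Om$, and \cref{comparison} (applied first with the competitor $0$ since $F_w\ge0$, then with $\overline u$ since $F_w\le n^{1+q^+}=(-\D_g)^s\overline u$) yields $0\le T(w)\le\overline u$, so $T(\mathcal K)\subseteq\mathcal K$. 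Testing the equation with $T(w)$ and using \cref{holder}, \eqref{eq growth}, \cref{lemma equivalent norm} and \cref{poincare} gives $\|T(w)\|_{W^{s,G}_0(\Om)}\le C(n)$ uniformly over $w\in\mathcal K$; hence $T(\mathcal K)$ is relatively compact in $L^2(\Om)$ by the compact embedding $W^{s,G}_0(\Om)\hookrightarrow\hookrightarrow L^2(\Om)$ of \cref{lemma embedding} (note $t^2\prec\prec G$ since $p^->2$).

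Next I would check that $T$ is continuous on $\mathcal K$. If $w_j\to w$ in $L^2(\Om)$ with $w_j,w\in\mathcal K$, then along a subsequence $w_j\to w$ a.e., and since $s\mapsto f_n(x)(s+\tfrac1n)^{-q(x)}$ is Lipschitz on $[0,R]$ uniformly in $x$, dominated convergence gives $F_{w_j}\to F_w$ in $L^2(\Om)$, hence in $(W^{s,G}_0(\Om))^{\ast}$. The bounded sequence $u_j:=T(w_j)$ converges, along a further subsequence, weakly in $W^{s,G}_0(\Om)$ and strongly in $L^2(\Om)$ to some $\tilde u$; since $(-\D_g)^s$ is a monotone hemicontinuous operator on $W^{s,G}_0(\Om)$ enjoying the $(S_+)$ property (a consequence of \cref{lemma lindqvist} and the uniform convexity coming from $p^->1$), the fact that $\langle(-\D_g)^su_j,\,u_j-\tilde u\rangle=\langle F_{w_j},\,u_j-\tilde u\rangle\to0$ forces $u_j\to\tilde u$ strongly in $W^{s,G}_0(\Om)$ and $(-\D_g)^s\tilde u=F_w$. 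By the uniqueness in \cref{laplace equation}, $\tilde u=T(w)$, and as this holds along every subsequence, $T(w_j)\to T(w)$ in $L^2(\Om)$. (Alternatively one can pass to the limit directly in \eqref{eq weak solution}: a.e. convergence of $u_j$ makes the difference quotients $\tfrac{u_j(x)-u_j(y)}{|x-y|^s}$ converge pointwise, and \cref{holder} with the measure $\tfrac{dx\,dy}{|x-y|^N}$ together with the uniform modular bound gives equi-integrability of the nonlocal integrand, so Vitali's theorem applies; or one can squeeze $u_j$ between the solutions of $(-\D_g)^su=F_w\pm\|F_{w_j}-F_w\|_{L^\infty}$ via \cref{comparison}.)

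Finally, Schauder's fixed point theorem applied to $T:\mathcal K\to\mathcal K$ provides $u_n\in\mathcal K$ with $u_n=T(u_n)$; by construction this is a weak solution of \eqref{eq approx problem} with $u_n>0$ in $\Om$. Since $F_{u_n}\in L^\infty(\Om)$ with $\|F_{u_n}\|_{L^\infty(\Om)}\le n^{1+q^+}$, the interior Hölder regularity theory for $(-\D_g)^s$ with bounded right-hand side (the same ingredients used for \cref{main 3}) yields $u_n\in C^{\alpha(n)}(\Om)$ for some $\alpha(n)\in(0,1)$, where the exponent is allowed to depend on $n$ through this bound. I expect the only genuinely delicate step to be the continuity of $T$, i.e.\ the passage to the limit in the nonlinear nonlocal operator described above; everything else is a routine combination of \cref{laplace equation}, \cref{comparison}, and the embedding results from the preliminaries.
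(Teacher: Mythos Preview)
Your proposal is correct and follows the same overall strategy as the paper: apply Schauder's fixed-point theorem to the solution operator $w\mapsto$ (unique solution of $(-\D_g)^s u=F_w$) furnished by \cref{laplace equation}, then read off H\"older regularity from the bounded right-hand side. The execution differs in a few technical choices. The paper works in $W^{s,G}_0(\Om)$ and shows the operator maps a large ball into itself via an energy estimate, whereas you work in $L^2(\Om)$ on the order interval $\{0\le w\le \overline u\}$ produced by the barrier $\overline u$ and \cref{comparison}; your setup makes the invariant set and compactness (via $W^{s,G}_0\hookrightarrow\hookrightarrow L^2$) slightly more transparent. For continuity the paper subtracts the two equations, tests with $w_i-w$, and invokes \cref{lemma lindqvist} directly to control $\|w_i-w\|_{W^{s,G}}$ by the $L^{\overline{G_*}}$-norm of the difference of right-hand sides, which goes to zero by DCT; your route through the $(S_+)$ property is the same monotonicity estimate packaged more abstractly, and your alternative via Vitali on the nonlocal integrand also works. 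Either implementation closes the argument; nothing essential is missing.
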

	\begin{proof}
		Note that $\frac{f_n(x)}{(u^+(x)+\frac{1}{n})^{q(x)}}\in L^\infty(\Om)$. Hence by \cref{laplace equation}, there exists a unique solution $w\in W^{s,G}_0(\Om)\cap L^\infty(\Om)$ to the problem 
		\begin{equation}\label{eq approx problem positive}
			\begin{split}
				(-\D_g)^sw(x)&=\frac{f_n(x)}{(u^+(x)+\frac{1}{n})^{q(x)}}\;\mbox{in }\; \Om,\\
				\nonumber w&>0\;\mbox{in }\; \Om,\\
				\nonumber w&=0\;\mbox{in }\; \RR^N\setminus\Om.
			\end{split}
		\end{equation}
		This allows us to define the operator $S:W^{s,G}_0(\Om)\to W^{s,G}_0(\Om)$ by $S(u)=w$ the solution of \cref{eq approx problem positive}. Multiplying both sides of \cref{eq approx problem positive} by $w$, we get
		\begin{multline*}
			\I{\RR^N}\I{\RR^N}g\left(\frac{w(x)-w(y)}{|x-y|^s}\right)\frac{(w(x)-w(y))}{|x-y|^{N+s}}dxdy
			=\I{\Om}\frac{f_n(x)w(x)}{(u(x)^++\frac{1}{n})^{q(x)}}dx
			\leq n^{1+\|q\|_{L^\infty(\Om)}}\|w\|_{L^1(\Om)}.
		\end{multline*}
		Applying \cref{eq growth}, we get 
		\begin{align*}
			\I{\RR^N}\I{\RR^N}G\left(\frac{|w(x)-w(y)|}{|x-y|^s}\right)\frac{dxdy}{|x-y|^{N}}
			&\leq \frac{1}{p^-}\I{\RR^N}\I{\RR^N}g\left(\frac{|w(x)-w(y)|}{|x-y|^s}\right)\frac{|w(x)-w(y)|}{|x-y|^{N+s}}dxdy\\
			&= \frac{1}{p^-}\I{\RR^N}\I{\RR^N}g\left(\frac{(w(x)-w(y))}{|x-y|^s}\right)\frac{(w(x)-w(y))}{|x-y|^{N+s}}dxdy\\
			&\leq \frac{n^{1+\|q\|_{L^\infty(\Om)}}}{p^-}\|w\|_{L^1(\Om)}.
		\end{align*}
		Assume $\|w\|_{W_0^{s,G}(\Om)}> 1,$
		\begin{multline*}
			\frac{1}{\|w\|_{W_0^{s,G}(\Om)}^{p^-}}\I{\RR^N}\I{\RR^N}G\left(\frac{|w(x)-w(y)|}{|x-y|^s}\right)\frac{dxdy}{|x-y|^{N}}
			\geq \I{\RR^N}\I{\RR^N}G\left(\frac{|w(x)-w(y)|}{\|w\|_{W_0^{s,G}(\Om)}|x-y|^s}\right)\frac{dxdy}{|x-y|^{N}}
			=1.
		\end{multline*}
		So, we have
		$$
		\|w\|_{W_0^{s,G}(\Om)}^{p^-}\leq \frac{n^{1+\|q\|_{L^\infty(\Om)}}}{p^-}\|w\|_{L^1(\Om)},
		$$
		and consequently, by \cref{lemma embedding},
		$$\|w\|_{W_0^{s,G}(\Om)}^{p^--1}\leq Cn^{1+\|q\|_{L^\infty(\Om)}}$$
		provided $\|w\|_{W_0^{s,G}(\Om)}>1$. Setting $R:=\max\{1,\left(Cn^{1+\|q\|_{L^\infty(\Om)}}\right)^\frac{1}{p^--1}\}$, we can see that $S$ maps the ball of radius $R$ in the metric space $W_0^{s,G}(\Om)$, into itself.
		The proof will now be complete if we show that $S$ is continuous and compact.\\
		
		\textbf{Proof of continuity of $S$:}
		Assume that $u_i\to u$ in $W_0^{s,G}(\Om)$. Set $w_i=S(u_i)$ and $w=S(u)$. So that we have for any $\varphi\in W^{s,G}_0(\Om)$,
		\begin{align}
			\I{\RR^N}\I{\RR^N}g\left(\frac{w_i(x)-w_i(y)}{|x-y|^s}\right)\frac{(\varphi(x)-\varphi(y))}{|x-y|^{N+s}}dxdy
			&=\I{\Om}\frac{f_n(x)\varphi(x)}{(u_i(x)^++\frac{1}{n})^{q(x)}}dx \quad \mbox{and} \label{eq1}
			\\
			\I{\RR^N}\I{\RR^N}g\left(\frac{w(x)-w(y)}{|x-y|^s}\right)\frac{(\varphi(x)-\varphi(y))}{|x-y|^{N+s}}dxdy
			&=\I{\Om}\frac{f_n(x)\varphi(x)}{(u(x)^++\frac{1}{n})^{q(x)}}dx\label{eq2}.
		\end{align}
		
		We have to show that $w_i\to w$ in $W_0^{s,G}(\Om)$. By \cref{lemma embedding}, passing to a subsequence, $u_i\to u$ in $L^{G_*}(\Om)$ and $u_i\to u$ a.e. in $\Om$.
		Set $\overline{w_i}:=w_i-w$. Subtracting \cref{eq2} from \cref{eq1}, with the choice $\varphi=\overline{w_i}$, and then applying \cref{lemma lindqvist} for $a=\frac{w(x)-w(y)}{|x-y|^s}$ and, $b=\frac{w_i(x)-w_i(y)}{|x-y|^s}$, we get
		\begin{multline*}
			C(G)\I{\RR^N}\I{\RR^N}G\left(\frac{|\overline{w_i}(x)-\overline{w_i}(y)|}{|x-y|^s}\right)\frac{dxdy}{|x-y|^N}dxdy\\
			\leq \I{\Om}f_n(x)\left(\frac{1}{(u_i(x)^++\frac{1}{n})^{q(x)}}
			- \frac{1}{(u(x)^++\frac{1}{n})^{q(x)}}\right)(w_i(x)-w(x))dx.
		\end{multline*}
		We apply \cref{lemma equivalent norm} on the left-hand side and H\"older inequality on the right-hand side of this equation to get,
		\begin{align*}
			C(G)&\min\left\{\|w_i-w\|_{W^{s,G}}^{p^+}, \|w_i-w\|_{W^{s,G}}^{p^-}\right\}\\
			&\leq C\left\| f_n(x)\left(\frac{1}{(u_i(x)^++\frac{1}{n})^{q(x)}}
			- \frac{1}{(u(x)^++\frac{1}{n})^{q(x)}}\right) \right\|_{L^{G_*'}}\|w_i-w\|_{L^{G_*}}\\
			&\leq C\left\| f_n(x)\left(\frac{1}{(u_i(x)^++\frac{1}{n})^{q(x)}}
			- \frac{1}{(u(x)^++\frac{1}{n})^{q(x)}}\right) \right\|_{L^{G_*'}}\|w_i-w\|_{W^{s,G}},
		\end{align*}
		where the last inequality follows from \cref{poincare}. This gives
		$$\min\left\{\|w_i-w\|_{W^{s,G}}^{p^+-1}, \|w_i-w\|_{W^{s,G}}^{p^--1}\right\}\leq C\left\| f_n(x)\left(\frac{1}{(u_i(x)^++\frac{1}{n})^{q(x)}}-\frac{1}{(u(x)^++\frac{1}{n})^{q(x)}}\right)\right\|_{L^{G_*'}}.$$
		Now observe that
		$$
		\left|f_n(x)\left(\frac{1}{(u_i(x)^++\frac{1}{n})^{q(x)}}
		- \frac{1}{(u(x)^++\frac{1}{n})^{q(x)}}\right)\right|
		\leq 2n^{q(x)+1}
		\leq 2n^{\|q\|_{L^\infty}+1}.
		$$
		Hence, as $u_i\to u$ pointwise a.e., by DCT it follows that $w_i\to w$ in $W^{s,G}_0$. Thus $S$ is continuous.\\
		
		\textbf{Proof of compactness of $S$:} Assume that $u_i$ is a bounded sequence in $W_0^{s,G}(\Om)$. As before, denote $w_i:=S(u_i)$. We wish to show that $w_i$ has a convergent subsequence in $W_0^{s,G}(\Om)$. From \cref{eq1,holder,lemma equivalent norm}, we get
		\begin{align*}
			\min\left\{\|w_i\|_{W^{s,G}}^{p^+}, \|w_i\|_{W^{s,G}}^{p^-}\right\}
			&\leq
			C(G)\I{\RR^N}\I{\RR^N}G\left(\frac{w_i(x)-w_i(y)}{|x-y|^s}\right)\frac{dxdy}{|x-y|^{N}}\\
			&\leq C(G)\I{\RR^N}\I{\RR^N}g\left(\frac{w_i(x)-w_i(y)}{|x-y|^s}\right)\frac{(w_i(x)-w_i(y))}{|x-y|^{N+s}}dxdy\\
			&=C(G)\I{\Om}\frac{f_n(x)w_i(x)}{(u_i(x)^++\frac{1}{n})^{q(x)}}dx
			\leq n^{1+\|q\|_{L^\infty(\Om)}}\|w_i\|_{L^1(\Om)}\\
			&\leq C n^{1+\|q\|_{L^\infty(\Om)}}\|w_i\|_{W^{s,G}(\Om)}.
		\end{align*}  
		This shows that $w_i$ is a bounded sequence in $W_0^{s, G}(\Om)$. From the boundedness of the two sequences, $u_i,w_i$, we conclude that there exists $u,w\in W_0^{s,G}(\Om)$ such that $u_i\rightharpoonup u$ and $w_i\rightharpoonup w$ in $W_0^{s,G}(\Om)$.
		We now want to show $S(u)=w$, that is for any $\varphi\in\test(\Om)$,
		\begin{equation}\label{eq4}
			\I{\RR^N}\I{\RR^N}g\left(\frac{w(x)-w(y)}{|x-y|^s}\right)\frac{(\varphi(x)-\varphi(y))}{|x-y|^{N+s}}dxdy\\
			=\I{\Om}\frac{f_n(x)\varphi(x)}{(u(x)^++\frac{1}{n})^{q(x)}}dx.
		\end{equation}
		Note that we already know
		\begin{equation}\label{eq3}
			\I{\RR^N}\I{\RR^N}g\left(\frac{w_i(x)-w_i(y)}{|x-y|^s}\right)\frac{(\varphi(x)-\varphi(y))}{|x-y|^{N+s}}dxdy\\
			=\I{\Om}\frac{f_n(x)\varphi(x)}{(u_i(x)^++\frac{1}{n})^{q(x)}}dx.
		\end{equation}
		By DCT, it is seen easily that the right-hand side of \cref{eq3} converges to the right-hand side of \cref{eq4}. It remains to show the convergence of the left-hand side. Note that,
		$$ 
		\overline{G}(g(t))
		= \I{0}^{g(t)}g^{-1}(\tau)d\tau
		=\I{0}^t\tau g'(\tau)d\tau
		\equiv \I{0}^tg(\tau)d\tau
		=G(t).
		$$
		Using this and the fact that $w_i$'s are bounded in $W^{s,G}_0(\Om)$, we have that
		$g\left(\frac{|w_i(x)-w_i(y)|}{|x-y|^s}\right)$ is a bounded sequence in $L^{\overline{G}}(\frac{1}{|x-y|^N},\RR^N\times\RR^N)$ hence it has a weakly convergent subsequence. Thus we conclude that, up to a subsequence,
		$$
		g\left(\frac{|w_i(x)-w_i(y)|}{|x-y|^s}\right) \rightharpoonup g\left(\frac{|w(x)-w(y)|}{|x-y|^s}\right)
		$$ weakly in $L^{\overline{G}}(\frac{1}{|x-y|^N},\RR^N\times\RR^N)$. Now, since $\frac{|\varphi(x)-\varphi(y)|}{|x-y|^s}\in L^G(\frac{1}{|x-y|^N},\RR^N\times\RR^N)$,
		$$\I{\RR^N}\I{\RR^N} g\left(\frac{|w_i(x)-w_i(y)|}{|x-y|^s}\right)\frac{|\varphi(x)-\varphi(y)|}{|x-y|^{N+s}}dxdy\to 
		\I{\RR^N}\I{\RR^N} g\left(\frac{|w(x)-w(y)|}{|x-y|^s}\right)\frac{|\varphi(x)-\varphi(y)|}{|x-y|^{N+s}}dxdy$$
		Since the solution so obtained is in $u_n\in W^{s,G}_0(\Om)\cap L^\infty(\Om)$ and hence it is $C^{\alpha (n)}(\Om)$ where $\alpha(n)\in (0,1),\;\forall n\in\NN$ by Theorem 1.1 of Bonder et al \cite{Bo}.
	\end{proof}
	\begin{lemma}\label{lemma monotone}
		Assume $g$ to be convex on $(0,1)$.	The sequence of functions $\{u_n\}_n$, found in \cref{lemma approx problem} satisfies
		$$
		u_n(x)\leq u_{n+1}(x), \quad \mbox{for almost every } x\in\Om,
		$$
		and for any compact set $K\subseteq \Om$, there exists a constant $l=l(K)>0$ such that for any $n$, large enough,
		$$
		u_n(x)\geq l \quad \mbox{for almost every }x\in K.
		$$
	\end{lemma}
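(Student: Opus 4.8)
The plan is to establish the two assertions separately, both relying on the Comparison Principle (\cref{comparison}) applied to the sequence $\{u_n\}$ obtained in \cref{lemma approx problem}.

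\emph{Monotonicity.} First I would show $u_n \le u_{n+1}$ a.e.\ in $\Om$. Observe that $u_n$ solves $(-\D_g)^s u_n = f_n(x)(u_n + \tfrac1n)^{-q(x)}$ while $u_{n+1}$ solves $(-\D_g)^s u_{n+1} = f_{n+1}(x)(u_{n+1} + \tfrac1{n+1})^{-q(x)}$. Since $f_n = \min\{f,n\} \le \min\{f,n+1\} = f_{n+1}$ and $\tfrac1{n+1} < \tfrac1n$, one gets, \emph{whenever} $u_{n+1} \le u_n$ holds pointwise,
\[
\frac{f_{n+1}(x)}{(u_{n+1}(x)+\tfrac1{n+1})^{q(x)}} \ge \frac{f_n(x)}{(u_{n+1}(x)+\tfrac1n)^{q(x)}} \ge \frac{f_n(x)}{(u_n(x)+\tfrac1n)^{q(x)}},
\]
so that $u_{n+1}$ is a supersolution of the equation solved by $u_n$ \emph{on the set where $u_{n+1}\le u_n$}. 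Testing the weak formulations of both equations against $\varphi = (u_n - u_{n+1})^+ \in W^{s,G}_0(\Om)$ and subtracting, the right-hand side is $\le 0$ because the integrand is supported on $\{u_n > u_{n+1}\}$ where the above inequality chain applies; then the argument of \cref{comparison} (the algebraic identity with $Q(x,y)\ge 0$ and the sign computation $(\varphi(x)-\varphi(y))(\psi(y)-\psi(x))\le 0$ with $\psi = u_{n+1}-u_n$) forces $(u_n - u_{n+1})^+ = 0$ a.e., i.e.\ $u_n \le u_{n+1}$. Here I would invoke \cref{comparison} in the same spirit even though $u_n$ may only be known to be in $C^{\alpha(n)}(\Om)\cap W^{s,G}_0(\Om)$ rather than $C(\RR^N)$ — this is a minor technical point, handled either by the interior continuity plus the zero exterior datum, or by redoing the short energy computation directly, which needs only $u_n, u_{n+1} \in W^{s,G}_0(\Om)$.

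\emph{Uniform positive lower bound on compacta.} Fix $K \Subset \Om$. Since $u_1$ is, by \cref{lemma approx problem}, a weak solution of $(-\D_g)^s u_1 = f_1(u_1+1)^{-q}$ with $f_1 = \min\{f,1\} \ge 0$ not identically zero (as $f$ is not identically zero for $n$ large; if $f_1 \equiv 0$ one simply starts from the first index $n_0$ with $f_{n_0}\not\equiv 0$), we have $(-\D_g)^s u_1 \ge 0$ weakly in $\Om$ and $u_1 \ge 0$ in $\RR^N$, while $u_1 > 0$ in $\Om$ by \cref{lemma approx problem}. Choosing a ball $B_R$ with $\overline{B_R}\subset\Om$ and covering $K$ by finitely many such balls, the Weak Harnack Inequality \cref{lemma smp} applied to $u_1$ gives $\inf_{B_{R/4}} u_1 \ge \sigma R^s g^{-1}\!\big(\avint_{B_R\setminus B_{R/2}} g(R^{-s} u_1)\,dx\big) > 0$ since $u_1 > 0$ on the annulus. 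Patching over the finite cover yields a constant $l = l(K) > 0$ with $u_1 \ge l$ a.e.\ on $K$, and then by the monotonicity $u_n \ge u_1 \ge l$ a.e.\ on $K$ for every $n$ (in particular for all $n$ large).

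\emph{Main obstacle.} The delicate point is the first step: to run the comparison argument one must control the sign of $\int_\Om \big(f_{n+1}(u_{n+1}+\tfrac1{n+1})^{-q} - f_n(u_n+\tfrac1n)^{-q}\big)(u_n-u_{n+1})^+\,dx$, and the inequality $f_{n+1}(u_{n+1}+\tfrac1{n+1})^{-q}\ge f_n(u_n+\tfrac1n)^{-q}$ is only available on the set $\{u_{n+1}\le u_n\}$ — precisely the support of the test function — so one has to be careful that the chain of elementary inequalities is applied only there, using $0 < \tfrac1{n+1} < \tfrac1n$, $0\le f_n\le f_{n+1}$, and monotonicity of $t\mapsto t^{-q(x)}$. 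The role of the hypothesis that $g$ be convex on $(0,1)$ is to guarantee, via \cref{lemma lindqvist} (or the $Q\ge 0$ structure in \cref{comparison}), that the quadratic form on the left-hand side is coercive enough to conclude; no other subtlety arises, the rest being a routine patching of the weak Harnack estimate.
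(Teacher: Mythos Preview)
Your proposal is correct and follows essentially the same route as the paper: test the two approximated equations against $(u_n-u_{n+1})^+$, observe that the right-hand side is nonpositive on the support $\{u_n>u_{n+1}\}$ via $f_n\le f_{n+1}$ and $\tfrac1{n+1}<\tfrac1n$, and then invoke \cref{comparison}; for the lower bound, combine strict positivity of a fixed $u_{n_0}$ (via \cref{lemma smp}) with the just-proved monotonicity. The only cosmetic difference is that the paper secures the $C(\RR^N)$ hypothesis of \cref{comparison} by citing boundary H\"older regularity from \cite{BoSaVi}, whereas you note (correctly) that the energy argument can be run directly in $W^{s,G}_0(\Om)$.
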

	\begin{proof}
		Set the notation $w_n(x)=(u_{n}(x)-u_{n+1}(x))^+$. Then we note that, for any $x\in\Om$, and $f_n(x)\leq f_{n+1}(x)$, 
		\begin{align*}
			\I{\Om} &\frac{f_n(x)}{(u_n(x)+\frac{1}{n})^{q(x)}}w_n(x)dx-\I{\Om}\frac{f_{n+1}(x)}{(u_{n+1}(x)+\frac{1}{n+1})^{q(x)}}w_n(x)dx\\
			&=\I{\Om}\left( \frac{f_n(x)}{(u_n(x)+\frac{1}{n})^{q(x)}}-\frac{f_{n+1}(x)}{(u_{n+1}(x)+\frac{1}{n+1})^{q(x)}} \right)w_n(x)dx\\
			&=\I{\Om}\left( \frac{f_n(x)}{(u_n(x)+\frac{1}{n})^{q(x)}}-\frac{f_{n+1}(x)}{(u_{n+1}(x)+\frac{1}{n+1})^{q(x)}} \right)(u_n(x)-u_{n+1}(x))^+dx\\
			&\leq \I{\{u_{n}(x)>u_{n+1}(x)\}}f_{n+1}(x)\left( \frac{(u_{n+1}(x)+\frac{1}{n+1})^{q(x)}-(u_n(x)+\frac{1}{n})^{q(x)}}{(u_n(x)+\frac{1}{n})^{q(x)}(u_{n+1}(x)+\frac{1}{n+1})^{q(x)}} \right)(u_{n}-u_{n+1})^+dx\\
			&\leq 0.
		\end{align*}
		Then the above calculation and \cref{eq approx problem} implies
		\begin{align*}
			\I{\RR^N}\I{\RR^N}&g\left(\frac{u_{n}(x)-u_{n}(y)}{|x-y|^s}\right)\frac{w_n(x)-w_n(y)}{|x-y|^{N+s}}dxdy\\
			&\leq \I{\RR^N}\I{\RR^N}g\left(\frac{u_{n+1}(x)-u_{n+1}(y)}{|x-y|^s}\right)\frac{w_n(x)-w_n(y)}{|x-y|^{N+s}}dxdy.
		\end{align*}
		Now \cite[Theorem~1.1]{BoSaVi} implies that both $u_n,u_{n+1}$ are H\"{o}lder continuous up to the boundary. So, we can apply \cref{comparison} to get $u_n\leq u_{n+1}$ a.e. on $\RR^N$. This concludes the proof of the first part.
		
		The second part follows from the continuity of $u_n$, and \cref{lemma smp}, which gives $u_n>0$ on $\Om$.
	\end{proof}

	\begin{proof}[\textbf{Proof of \cref{main 1}}]
		By \cref{lemma approx problem}, \cref{eq approx problem} has a weak solution $u_n$. Let $\varphi\in \test(\Om)$. We have
		\begin{equation}\label{eq 1}
			\I{\RR^N}\I{\RR^N}g\left(\frac{u_{n}(x)-u_{n}(y)}{|x-y|^s}\right)\frac{\varphi(x)-\varphi(y)}{|x-y|^{N+s}}dxdy
			=\I{\Om}\frac{f_n(x)\varphi(x)}{(u_n(x)+\frac{1}{n})^{q(x)}}dx.
		\end{equation}
		First, we claim:
		\begin{multline}\label{eq 2}
			\lim_{n\to\infty}\I{\RR^N}\I{\RR^N}g\left(\frac{u_{n}(x)-u_{n}(y)}{|x-y|^s}\right)\frac{\varphi(x)-\varphi(y)}{|x-y|^{N+s}}dxdy\\
			=
			\I{\RR^N}\I{\RR^N}g\left(\frac{u(x)-u(y)}{|x-y|^s}\right)\frac{\varphi(x)-\varphi(y)}{|x-y|^{N+s}}dxdy.
		\end{multline}
		\textbf{Proof of the claim:} 
		Set $\omega_\delta:=\Om\setminus\Om_\delta$. Then by \cref{lemma monotone}, there exists a constant $l>0$ such that $u_n\geq l>0$ on $\omega_\delta$. We get, using \cref{lemma embedding}, and choosing $\varphi=u_n$,
		\begin{align*}
			C(G) &\I{\RR^N}\I{\RR^N}G\left(\frac{|u_{n}(x)-u_{n}(y)|}{|x-y|^s}\right)\frac{dxdy}{|x-y|^{N}}\\
			&\leq \I{\RR^N}\I{\RR^N}g\left(\frac{u_{n}(x)-u_{n}(y)}{|x-y|^s}\right)\frac{u_n(x)-u_n(y)}{|x-y|^{N+s}}dxdy\\
			&= \I{\Om}\frac{f_n(x)u_n(x)}{(u_n(x)+\frac{1}{n})^{q(x)}}dx\\
			&=\I{\Om_\delta}\frac{f_n(x)u_n(x)}{(u_n(x)+\frac{1}{n})^{q(x)}}dx
			+\I{\omega_\delta}\frac{f_n(x)u_n(x)}{(u_n(x)+\frac{1}{n})^{q(x)}}dx\\
			&\leq \I{\Om_\delta\cap \{u_n\leq1\}}f_n(x)dx
			+\I{\Om_\delta\cap\{u_n>1\}}f_n(x)u_n(x)dx
			+\I{\omega_\delta}\frac{f_n(x)u_n(x)}{l^{q(x)}}dx\\
			&\leq \|f\|_{L^1(\Om)}+(1+\|l^{-q(\cdot)}\|_{L^\infty(\omega_\delta)})\|f\|_{L^{\overline{G_*}}(\Om)}\|u_n\|_{L^{G_*}(\Om)}\\
			&\leq \|f\|_{L^1(\Om)}+C_1\|u_n\|_{W^{s,G}_0(\Om)}.
		\end{align*}
		Assuming $\alpha:=\|u_n\|_{W^{s,G}_0(\Om)}>1$, we get, using \cref{lemma delta2},
		\begin{multline*}
			1=\I{\RR^N}\I{\RR^N}G\left(\frac{|u_{n}(x)-u_{n}(y)|}{\alpha|x-y|^s}\right)\frac{dxdy}{|x-y|^{N}}\\
			\leq \frac{1}{\alpha^{p^-}} \I{\RR^N}\I{\RR^N}G\left(\frac{|u_{n}(x)-u_{n}(y)|}{|x-y|^s}\right)\frac{dxdy}{|x-y|^{N}}\leq \frac{\|f\|_{L^1(\Om)}}{\alpha^{p^-}}+C_1\frac{1}{\alpha^{p^--1}}
		\end{multline*}
		This shows that $\|u_n\|_{W^{s,G}_0(\Om)}$ must be bounded. So $u_n\rightharpoonup u$ in $W^{s,G}_0$ weakly. By \cref{lemma embedding}, $u_n\to u$ strongly in $L^{1}(\Om)$, and hence $u_n\to u$ pointwise a.e. up to a subsequence.

		Now applying \cref{lemma delta2}
		\begin{equation*}
			\overline{G}(g(t))=\I{0}^{g(t)}\overline{g}(\tau)d\tau
			=\I{0}^t\overline{g}(g(\tau))g'(\tau)d\tau
			=\I{0}^t\tau g'(\tau)d\tau.
		\end{equation*}
		This implies
		\begin{equation}\label{eq 3}
			(p^--1)G(t)\leq \overline{G}(g(t)) \leq (P^+-1)G(t).
		\end{equation}
		This, along with \cref{lemma equivalent norm}, shows that the sequence of functions $(x,y)\mapsto g\left(\frac{u_{n}(x)-u_{n}(y)}{|x-y|^s}\right)$ is bounded in $L^{\overline{G}}\left(\RR^N\times\RR^N,\frac{dxdy}{|x-y|^N}\right)$. So it has a weakly convergent subsequence; without loss of generality, we assume it to be itself. It is easy to check that the the function $(x,y)\mapsto\frac{\varphi(x)-\varphi(y)}{|x-y|^s}$ is in $L^{G}\left(\RR^N\times\RR^N,\frac{dxdy}{|x-y|^N}\right)$.
		Hence \cref{eq 2} follows and the claim is true.

		Now, in order to complete the proof, taking into account \cref{eq 1}, we only need to show the convergence of the right-hand side of \cref{eq 1}. Note that 
		$$
		\left|\frac{f_n(x)\varphi(x)}{(u_n(x)+\frac{1}{n})^{q(x)}}\right|\leq |l^{-q(x)}f(x)\varphi(x)|\in L^1(\Om),
		$$
		where we get $l$ from applying \cref{lemma monotone} on $\mbox{supp}(\varphi)$. Therefore, we can apply DCT to get 
		$$
		\lim_{n\to\infty}\I{\Om}\frac{f_n(x)\varphi(x)}{(u_n(x)+\frac{1}{n})^{q(x)}}dx
		=\I{\Om}\frac{f(x)\varphi(x)}{u(x)^{q(x)}}dx.
		$$
		
		Hence the proof is complete.
		
	\end{proof}
	
	\begin{lemma}\label{lemma estimate}
		For any $a,b\in\RR$, we have 
		$$
		|g(a)-g(b)| 
		\leq C \frac{|a-b|g(|a|+|b|)}{|a|+|b|}
		\leq C g(|a|+|b|).
		$$
	\end{lemma}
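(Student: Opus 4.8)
The plan is to deduce the estimate directly from the fundamental theorem of calculus, the monotonicity of $g'$, and the right-hand bound in $(H_g)$. Recall that $g$ is taken to be odd on $\RR$, so that its a.e.\ derivative $g'$ is even, nonnegative off the origin (since $g$ is increasing on $(0,\infty)$), and nondecreasing in $|t|$ by $(H_c)$; moreover $(H_e)$ makes $g$ absolutely continuous on compact intervals, hence (its odd extension being absolutely continuous as well) $g(a)-g(b)=\int_b^a g'(t)\,dt$ for every $a,b\in\RR$.

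If $a=b=0$ both sides vanish, so assume $|a|+|b|>0$ and, without loss of generality, $b\le a$. Since $|g'(t)|=g'(|t|)$ and every $t\in[b,a]$ obeys $|t|\le\max\{|a|,|b|\}\le|a|+|b|$, monotonicity of $g'$ gives
$$|g(a)-g(b)|=\Bigl|\int_b^a g'(t)\,dt\Bigr|\le\int_b^a g'(|t|)\,dt\le (a-b)\,g'(|a|+|b|)=|a-b|\,g'(|a|+|b|).$$
Applying the inequality $\tau g'(\tau)\le (p^+-1)\,g(\tau)$ from $(H_g)$ at $\tau=|a|+|b|$ yields
$$|g(a)-g(b)|\le |a-b|\,g'(|a|+|b|)\le (p^+-1)\,\frac{|a-b|\,g(|a|+|b|)}{|a|+|b|},$$
which is the first claimed bound with $C=p^+-1$; the second bound then follows at once from $|a-b|\le|a|+|b|$.

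The only subtle points are the validity of $g(a)-g(b)=\int_b^a g'(t)\,dt$ through the origin --- guaranteed because the odd extension of an absolutely continuous function vanishing at $0$ is again absolutely continuous --- and the tacit use of $p^+<\infty$, which is available from \eqref{eq growth}. I do not expect either to be a genuine obstacle: the lemma is essentially a one-line consequence of the convexity-type hypotheses $(H_c)$ and $(H_g)$ on $g$.
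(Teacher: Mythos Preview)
Your proof is correct and follows essentially the same approach as the paper: both write $g(a)-g(b)$ as an integral of $g'$, bound the argument by $|a|+|b|$ via monotonicity of $g'$, and then invoke $(H_g)$ to replace $g'(|a|+|b|)$ by $(p^+-1)\,g(|a|+|b|)/(|a|+|b|)$. The paper uses the parametrized form $\int_0^1 g'(a+(b-a)t)(b-a)\,dt$ and is terser about the odd extension and absolute continuity, but the argument is the same.
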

	\begin{proof}
		\begin{equation*}
			g(b)-g(a)=\I{0}^1g'(a+(b-a)t)(b-a)dt.
		\end{equation*}
		Now since $g'$ is increasing one has for $t\in(0,1)$, $|a+(b-a)t|\leq\left||a|+|b|\right|$. So we get $$|g(a)-g(b)|\leq |a-b|g'(|a|+|b|)$$ The results now follow trivially using the hypothesis on $g$.
	\end{proof}
	\begin{lemma}\label{lemma MVT}
		Let $\Phi:(0,\infty)\to(0,\infty)$ be a strictly convex, $C^1$-function such that $\Phi'$ is increasing and there exists $\theta_1,\theta_2\geq 0$ such that $\theta_1\frac{\Phi(x)}{x}\leq \Phi'(x)\leq\theta_2\frac{\Phi(x)}{x}$. For $x,y\in \RR$ and $\varepsilon>0$, define $S^x_\varepsilon:=\{x\geq\varepsilon\}\cap\{y\geq0\}$, and $S^y_\varepsilon:=\{x\geq0\}\cap\{y\geq\varepsilon\}$. Then for $(x,y)\in S^x_\varepsilon\cup S^y_\varepsilon$, 
		$$
		|\Phi(x)-\Phi(y)|\geq C\Phi'(\epsilon)|x-y|\;\mbox{with}\;C:=\max(\theta_1,1).
		$$
		\begin{proof}
			By symmetry, without loss of generality, we can assume $x> y$. Now for some $\lambda\in(y,x)$, we have $\Phi(x)-\Phi(y)=\Phi'(\lambda)(x-y)$. If we assume $x\geq y\geq \varepsilon>0$, then we have 
			$$
			|\Phi(x)-\Phi(y)|
			\geq \Phi'(\lambda)|x-y|
			\geq \Phi'(\varepsilon)|x-y|
			$$
			For, $0\leq y<\varepsilon\leq x$, then by strict convexity of $\Phi$, we get $$\frac{\Phi(x)-\Phi(y)}{x-y}\geq \frac{\Phi(x)}{x}\geq \theta_1\Phi'(x)\geq \theta_1\Phi'(\varepsilon)$$ 
			thus concluding the assertion.
		\end{proof}
	\end{lemma}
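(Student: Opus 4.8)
The plan is to use the symmetry of the inequality to reduce to $x>y$ (the case $x=y$ being trivial), and to note that for such a pair, membership in $S^x_\varepsilon\cup S^y_\varepsilon$ already forces $x\geq\varepsilon$: if $(x,y)\in S^x_\varepsilon$ this is immediate, and if $(x,y)\in S^y_\varepsilon$ then $x>y\geq\varepsilon$. So everything reduces to the configuration $x\geq\varepsilon>0$, $0\leq y<x$, and I would split according to whether $y\geq\varepsilon$ or $0\leq y<\varepsilon$. (Observe that one may always shrink $\theta_1$ so that $\theta_1\leq1$, so that effectively $C=\max(\theta_1,1)=1$.)

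In the subcase $y\geq\varepsilon$ the mean value theorem applied to the $C^1$ function $\Phi$ on $[y,x]$ gives a point $\lambda\in(y,x)$ with $\Phi(x)-\Phi(y)=\Phi'(\lambda)(x-y)$; since $\lambda>y\geq\varepsilon$ and $\Phi'$ is nondecreasing, $\Phi'(\lambda)\geq\Phi'(\varepsilon)$, whence $|\Phi(x)-\Phi(y)|\geq\Phi'(\varepsilon)|x-y|$, which is the assertion. In the subcase $0\leq y<\varepsilon\leq x$ the mean value point $\lambda$ may fall below $\varepsilon$, so I would instead use convexity. First, integrating the upper ratio bound $\Phi'(t)\leq\theta_2\,\Phi(t)/t$ over $(t,1)$ yields $\Phi(t)\leq\Phi(1)\,t^{\theta_2}$ for $t\leq1$, so $\Phi(0^+)=0$ and $\Phi$ extends to a convex function on $[0,\infty)$ with $\Phi(0)=0$. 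Then monotonicity of the difference quotients of a convex function gives
$$
\frac{\Phi(x)-\Phi(y)}{x-y}\ \geq\ \frac{\Phi(x)-\Phi(0)}{x}\ =\ \frac{\Phi(x)}{x},
$$
and since $\Phi'$ is increasing we have $\Phi(t)=\int_0^t\Phi'\leq t\,\Phi'(t)$, so $t\mapsto\Phi(t)/t$ is nondecreasing and hence $\Phi(x)/x\geq\Phi(\varepsilon)/\varepsilon$. The ratio hypotheses at $t=\varepsilon$ then convert $\Phi(\varepsilon)/\varepsilon$ into a multiple of $\Phi'(\varepsilon)$, and combining everything gives $|\Phi(x)-\Phi(y)|\geq C\,\Phi'(\varepsilon)|x-y|$.

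I expect the first subcase to be immediate and the content to lie entirely in the boundary case $0\leq y<\varepsilon\leq x$. The two delicate points there are (i) legitimizing the passage to the endpoint $0$ — that is, verifying $\Phi(0^+)=0$, which is exactly where the hypothesis $\Phi'(t)\leq\theta_2\,\Phi(t)/t$ enters — and (ii) tracking the constant, since it is the ratio hypotheses that allow one to trade the crude quantity $\Phi(x)/x$ for a multiple of $\Phi'(\varepsilon)$. Everything past that is routine.
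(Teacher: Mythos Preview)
Your overall scheme matches the paper's: reduce by symmetry to $x>y$, apply the mean value theorem when $y\geq\varepsilon$, and in the boundary case $0\leq y<\varepsilon\leq x$ invoke the convexity secant inequality $\frac{\Phi(x)-\Phi(y)}{x-y}\geq\frac{\Phi(x)}{x}$. You are right that this secant step tacitly requires $\Phi(0^+)=0$; the paper leaves this implicit and passes directly from $\frac{\Phi(x)}{x}$ to a multiple of $\Phi'(x)\geq\Phi'(\varepsilon)$, while you route through $\frac{\Phi(x)}{x}\geq\frac{\Phi(\varepsilon)}{\varepsilon}\geq\theta_2^{-1}\Phi'(\varepsilon)$, a harmless variant.

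There is, however, a sign slip in your justification of $\Phi(0^+)=0$. Integrating the upper bound $\Phi'(t)\leq\theta_2\,\Phi(t)/t$ over $(t,1)$ gives
\[
\ln\Phi(1)-\ln\Phi(t)\ \leq\ -\theta_2\ln t,
\]
hence $\Phi(t)\geq\Phi(1)\,t^{\theta_2}$ for $0<t\leq1$ --- a \emph{lower} bound, not the upper bound you claim. To force $\Phi(0^+)=0$ you must instead integrate the \emph{lower} ratio bound $\theta_1\,\Phi(t)/t\leq\Phi'(t)$ (with $\theta_1>0$), which yields $\Phi(t)\leq\Phi(1)\,t^{\theta_1}\to0$. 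Note that the stated hypothesis allows $\theta_1=0$, and then $\Phi(t)=1+t^2$ already violates the secant inequality ($\frac{\Phi(x)-\Phi(0)}{x}=x<\frac{1+x^2}{x}=\frac{\Phi(x)}{x}$), so some positivity of $\theta_1$ (or simply the assumption $\Phi(0^+)=0$, which holds in the paper's application) is genuinely needed. With that correction your argument is complete and essentially coincides with the paper's.
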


	\begin{lemma}\label{lemma bound}
		Let $\Phi,\ H,\ f,\ q$ be as in \cref{main 2}, $u_n$ be as in \cref{lemma approx problem}. Then there is a constant $C>0$, independent of $n$ such that $\|\Phi(u_n)\|_{W^{s,G}_0(\Om)}, \ \|\Phi(u)\|_{W^{s,G}_0(\Om)}\leq C$, where $u$ is the pointwise limit of $u_n$.
	\end{lemma}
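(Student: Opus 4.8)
The plan is to insert the test function $g(\Phi'(u_n))\Phi(u_n)$ into the equation solved by the approximating sequence $u_n$ of \cref{lemma approx problem}, turn the energy inequality of \cref{lemma convex} into a bound for the modular of $\Phi(u_n)$, and absorb with the continuous embedding $W^{s,G}_0(\Om)\hookrightarrow L^{G_*}(\Om)$ from \cref{lemma embedding}. Two facts about $\Phi$ are needed first. Since $G(\Phi'(t))=G(1)t^{q^*-1}$ by definition of $\Phi'$, differentiating and using \cref{eq growth} gives $\tfrac{q^*-1}{p^+}\le\tfrac{t\Phi''(t)}{\Phi'(t)}\le\tfrac{q^*-1}{p^-}$, so $\Phi$ satisfies the hypotheses of \cref{lemma MVT} with $\theta_1:=1+\tfrac{q^*-1}{p^+}$ and $\theta_2:=1+\tfrac{q^*-1}{p^-}=\tfrac{p^-+q^*-1}{p^-}$, namely $\theta_1\tfrac{\Phi(t)}{t}\le\Phi'(t)\le\theta_2\tfrac{\Phi(t)}{t}$; in particular $\Phi(t)\asymp t\Phi'(t)$ and $\Phi$ obeys two-sided $\D_2$-type power bounds with exponents $\theta_1,\theta_2$ normalized at $t=1$. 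Moreover, $\Phi'(t)g(\Phi'(t))\le p^+G(\Phi'(t))=p^+G(1)t^{q^*-1}$ together with $\Phi(t)\le t\Phi'(t)/\theta_1$ yields $g(\Phi'(t))\Phi(t)\le C\,t^{q^*}$.

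Fix $n$. Because $u_n\in L^\infty(\Om)$, the map $t\mapsto g(\Phi'(t))\Phi(t)$ is Lipschitz on the range of $u_n$, hence $g(\Phi'(u_n))\Phi(u_n)\in W^{s,G}_0(\Om)$ and \cref{lemma convex} applies with $F=F_n:=f_n(u_n+\tfrac1n)^{-q(\cdot)}$, so that
$$\int_\Om F_n\,g(\Phi'(u_n))\Phi(u_n)\,dx\ \ge\ C\,M_{W^{s,G}(\RR^N)}\big(\Phi(u_n)\big)\ \ge\ C\,\|\Phi(u_n)\|_{W^{s,G}_0(\Om)}^{p^-}$$
whenever $\|\Phi(u_n)\|_{W^{s,G}_0(\Om)}\ge 1$ (by \cref{lemma equivalent norm}; recall $\Phi(u_n)=0$ off $\Om$), with $C$ independent of $n$. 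For the left-hand side I would use $g(\Phi'(u_n))\Phi(u_n)\le Cu_n^{q^*}$ and split $\Om=\Om_\delta\cup\omega_\delta$, $\omega_\delta:=\Om\setminus\Om_\delta$: on $\omega_\delta$, \cref{lemma monotone} gives $u_n\ge l>0$ so $F_n\le\|l^{-q(\cdot)}\|_{L^\infty(\omega_\delta)}f$; on $\Om_\delta$, $q\le q^*$ gives $\tfrac{u_n^{q^*}}{(u_n+1/n)^{q(x)}}\le u_n^{q^*-q(x)}\le 1+u_n^{q^*}$. This reduces matters to bounding $\int_\Om f\,u_n^{q^*}\,dx$ uniformly in $n$. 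By H\"older's inequality (\cref{holder}) in the conjugate pair $(\overline H,H)$, $\int_\Om f\,u_n^{q^*}\le\|f\|_{L^{\overline H}(\Om)}\|u_n^{q^*}\|_{L^H(\Om)}$, and since $H(t)=G_*(t^\beta)$ with $\beta=\tfrac{p^-+q^*-1}{p^-q^*}$ and $q^*\beta=\theta_2$, a direct computation with Luxemburg norms gives $\|u_n^{q^*}\|_{L^H(\Om)}=\|u_n^{\theta_2}\|_{L^{G_*}(\Om)}^{q^*/\theta_2}$.

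To bound $\|u_n^{\theta_2}\|_{L^{G_*}(\Om)}$: on $\{u_n\le1\}$ the bound $u_n^{\theta_2}\le\theta_2\Phi(u_n)$ and the $\D_2$-property of $G_*$ give $\int_{\{u_n\le1\}}G_*(u_n^{\theta_2})\,dx\le C\,M_{L^{G_*}}(\Phi(u_n))\le C\big(1+\|\Phi(u_n)\|_{W^{s,G}_0(\Om)}^{\gamma}\big)$ via \cref{lemma embedding}, for a suitable $\gamma$; on $\{u_n>1\}$ one argues this set lies in a fixed compact $K\Subset\Om$ (uniformly in $n$, since $u_n\uparrow u$ and $u$ vanishes at $\partial\Om$) on which $u_n$ is bounded independently of $n$, whence $\int_{\{u_n>1\}}G_*(u_n^{\theta_2})\,dx\le C$. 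Reassembling the above inequalities produces $\|\Phi(u_n)\|_{W^{s,G}_0(\Om)}^{p^-}\le C\big(1+\|\Phi(u_n)\|_{W^{s,G}_0(\Om)}^{\kappa}\big)$ with $\kappa<p^-$ (for $G$ a pure power $\kappa=\tfrac{q^*p^-}{p^-+q^*-1}$, and $\kappa<p^-\iff p^->1$), which forces $\|\Phi(u_n)\|_{W^{s,G}_0(\Om)}\le C$. Finally, since $u_n\uparrow u$ a.e.\ and $\Phi$ is continuous and increasing, $\Phi(u_n)\uparrow\Phi(u)$ a.e., so Fatou's lemma applied to the modular gives $M_{W^{s,G}(\RR^N)}(\Phi(u))\le\liminf_n M_{W^{s,G}(\RR^N)}(\Phi(u_n))\le C$; since $\Phi(u)=0$ off $\Om$ and $G\in\D_2$ (use \cref{poincare}), this yields $\|\Phi(u)\|_{W^{s,G}_0(\Om)}\le C$.

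The step I expect to be hardest is the control of $\int_{\{u_n>1\}}G_*(u_n^{\theta_2})\,dx$, i.e.\ the contribution of the set where $u_n$ is large. There $\Phi(u_n)$ only satisfies $\Phi(u_n)\gtrsim u_n^{\theta_1}$ with $\theta_1\le\theta_2$, so $u_n^{\theta_2}$ cannot be dominated pointwise by a constant multiple of $\Phi(u_n)$ and $\Phi(u_n)\in L^{G_*}(\Om)$ by itself does not close the estimate; one is forced to show that $\{u_n>1\}$ sits inside a fixed compact subset of $\Om$ and that $u_n$ is bounded there uniformly in $n$. This in turn rests on the interior boundedness theory for $(-\D_g)^s$ applied to $(-\D_g)^su_n=F_n\le C_Kf$ on compact sets (using the lower bound $u_n\ge l_K>0$ from \cref{lemma monotone}) together with the integrability encoded in $f\in L^{\overline H}(\Om)$; making this precise, and checking that $\overline H$ is integrable enough, is the delicate part.
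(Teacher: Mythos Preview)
Your overall strategy matches the paper's: apply \cref{lemma convex} with $F_n=f_n(u_n+\tfrac1n)^{-q(\cdot)}$, use $g(\Phi'(t))\Phi(t)\le p^+G(1)t^{q^*}$ (this is exactly \cref{eq 7} in the paper), split into $\Om_\delta$ versus $\omega_\delta$ and $\{u_n<1\}$ versus $\{u_n\ge1\}$, pair with $f$ in $L^{\overline H}\times L^H$ via \cref{holder}, exploit $H(t)=G_*(t^{1/(rq^*)})$ to rewrite the $L^H$ norm as an $L^{G_*}$ norm raised to $rq^*$, embed with \cref{lemma embedding}, and close because $rq^*<p^-$. The final passage to $\Phi(u)$ via Fatou is also identical.

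The divergence is in how you treat $\int_{\{u_n\ge1\}}f\,u_n^{q^*}$. The paper bounds $u_n^{q^*}$ \emph{pointwise} by $C\,\Phi(u_n)^{rq^*}$ on that set \emph{before} applying H\"older, via the elementary estimate \cref{eq-r-bound}: for $t$ large, $t^{1/r}\le\tfrac{2}{r}\Phi(t)$, obtained by comparing $\tau^{1/r-1}$ with $\Phi'(\tau)=G^{-1}(G(1)\tau^{q^*-1})$ on $[1,t]$. Then H\"older yields $\|f\|_{L^{\overline H}}\|\Phi(u_n)^{rq^*}\|_{L^H}=\|f\|_{L^{\overline H}}\|\Phi(u_n)\|_{L^{G_*}}^{rq^*}$ directly, and the argument closes with no reference to interior $L^\infty$ bounds or to the geometry of $\{u_n>1\}$.

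Your alternative---H\"older first, then control $\|u_n^{\theta_2}\|_{L^{G_*}}$---leads to the gap you yourself flag. The claim that $\{u_n>1\}$ lies in a fixed compact $K\Subset\Om$ uniformly in $n$ presupposes uniform decay of $u_n$ near $\partial\Om$, which is essentially the boundary information $\Phi(u)\in W^{s,G}_0(\Om)$ the lemma is meant to establish; so this step is circular. And the uniform interior $L^\infty$ bound you invoke relies on regularity theory (\cref{main 3}) that in this paper is proved only \emph{after} the present lemma and \cref{main 2}. The single pointwise inequality \cref{eq-r-bound} is precisely the device that makes your detour unnecessary: replace $u_n^{q^*}$ by a power of $\Phi(u_n)$ before H\"older, not after.
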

	\begin{proof}
		We have, for $t>0$,
		$$
		\Phi(t):=\I{0}^tG^{-1}\left(G(1)\tau^{q^*-1}\right)d\tau,
		$$
		that is
		$$
		\Phi'(t):=G^{-1}\left(G(1)t^{q^*-1}\right),
		$$ 
		which gives, applying the fact that $\Phi'(t)$ is increasing and hence $\Phi(t)\leq t\Phi'(t)$,
		\begin{equation}\label{eq 7}
			g\left(\Phi'(t)\right)\Phi(t)
			=\frac{\Phi'(t)g\left(\Phi'(t)\right)}{G\left(\Phi'(t)\right)}\frac{G\left(\Phi'(t)\right)\Phi(t)}{\Phi'(t)}
			\leq p^+G(1)t^{q^*-1}\frac{\Phi(t)}{\Phi'(t)}
			\leq p^+G(1)t^{q^*}.
		\end{equation}
		Using \cref{eq 7,lemma convex}, and the fact $q^*>1$ we have
		\begin{align}\label{eq 4}
			\nonumber \I{\RR^N}\I{\RR^N}&G\left(\frac{|\Phi(u_n(x))-\Phi(u_n(y))|}{|x-y|^s}\right)\frac{dxdy}{|x-y|^N}\\
			\nonumber &\leq C \I{\Om}\frac{f_n(x)}{(u_n(x)+\frac{1}{n})^{q(x)}}g(\Phi'(u_n(x)))\Phi(u_n(x)) dx\\
			\nonumber &= C \left(\I{\Om_\delta,\ u_n<1}+ \I{\Om_\delta,\ u_n\geq1}+\I{\omega_\delta,\ u_n<1}+\I{\omega_\delta,\ u_n\geq1}\right)\frac{f_n(x)}{(u_n(x)+\frac{1}{n})^{q(x)}}g(\Phi'(u_n(x)))\Phi(u_n(x)) \\
			&\leq C\I{\Om\cap\{u_n<1\}}f_n(x)+ C\I{\Om\cap\{u_n\geq 1\}}f_n(x)u_n(x)^{q^*}.
		\end{align}
		Set $r:=\frac{p^-}{p^-+q^*-1}$. We have, for large enough $t_0$, and for any $t>t_0$,
		\begin{align}\label{eq-r-bound}
			\nonumber t^\frac{1}{r}
			&=\frac{1}{r}\I{0}^{1}\tau^{\frac{1}{r}-1}d\tau
			+ \frac{1}{r}\I{1}^{t}\tau^{\frac{1}{r}-1}d\tau
			\leq \frac{2}{r}\I{1}^{t}\tau^{\frac{1}{r}-1}G^{-1}(G(1))d\tau\\
			&\leq \frac{2}{r}\I{1}^{t}G^{-1}(G(1)\tau^{\frac{p^-(1-r)}{r}})d\tau
			\leq \frac{2}{r}\I{0}^{t}G^{-1}(G(1)\tau^{\frac{p^-(1-r)}{r}})d\tau
			= \frac{2}{r}\Phi(t).
		\end{align} 
		Applying \cref{eq-r-bound} on \cref{eq 4} and then using H\"older's inequality, and finally the fact that $|f_n|\leq |f|$, we get
		\begin{align}\label{eq 6}
		\nonumber	\I{\RR^N}\I{\RR^N}&G\left(\frac{|\Phi(u_n(x))-\Phi(u_n(y))|}{|x-y|^s}\right)\frac{dxdy}{|x-y|^N}\\
			\nonumber &\leq C\I{\Om\cap\{u_n<1\}}f_n(x)+ C\I{\Om\cap\{u_n\geq1\}}f_n(x)\Phi(u_n(x))^{rq^*}\\
			\nonumber &\leq C\|f_n\|_{L^1(\Om)}+C\|f_n\|_{L^{\overline{H}}(\Om)}\|\Phi(u_n)^{rq^*}\|_{L^{H}(\Om)}\\
			 &\leq C\|f\|_{L^1(\Om)}+C\|f\|_{L^{\overline{H}}(\Om)}\|\Phi(u_n)^{rq^*}\|_{L^{H}(\Om)}.
		\end{align}
		Observe that 
		\begin{align*}
			\left\| \Phi^{rq^*}(u_n) \right\|_{L^H(\Om)}
			&= \inf \left\{ \lambda>0\ \Big|\ \I{\Om} H\left(\frac{\Phi(u_n)^{rq^*}}{\lambda}\right) \leq 1\right\}\\
			&= \inf \left\{ \lambda^{rq^*}>0\ \Big|\ \I{\Om} H\left(\frac{\Phi(u_n)^{rq^*}}{\lambda^{rq^*}}\right) \leq 1\right\}\\
			&= \left(\inf \left\{ \lambda>0\ \Big|\ \I{\Om} H\left(\frac{\Phi(u_n)^{rq^*}}{\lambda^{rq^*}}\right) \right\}\right)^{rq^*}\\
			&=\left(\inf \left\{ \lambda>0\ \Big|\ \I{\Om} G_*\left(\frac{\Phi(u_n)}{\lambda}\right) \right\}\right)^{rq^*}
			=\left\| \Phi(u_n) \right\|_{L^{G_*}(\Om)}^{rq^*},
		\end{align*}
		to see the last line recall that $G_*(t):=H\left(t^{rq^*}\right)$. Combining this with \cref{eq 6} gives
		$$\I{\RR^N}\I{\RR^N}G\left(\frac{|\Phi(u_n(x))-\Phi(u_n(y))|}{|x-y|^s}\right)\frac{dxdy}{|x-y|^N}
		\leq C\|f\|_{L^1(\Om)}+C\|f\|_{L^{\overline{H}}(\Om)}\|\Phi(u_n)\|_{L^{G_*}(\Om)}^{rq^*}.$$
		From \cref{lemma embedding}, we can write
		$$\I{\RR^N}\I{\RR^N}G\left(\frac{|\Phi(u_n(x))-\Phi(u_n(y))|}{|x-y|^s}\right)\frac{dxdy}{|x-y|^N}
		\leq C\|f\|_{L^1(\Om)}+C\|f\|_{L^{\overline{H}}(\Om)}\|\Phi(u_n)\|_{W^{s,G}_0(\Om)}^{rq^*}.$$    
		When $ \|\Phi(u_n)\|_{W^{s,G}_0(\Om)}>t_0$, using \cref{lemma equivalent norm}, we get
		$$
		\|\Phi(u_n)\|_{W^{s,G}_0(\Om)}^{p^-}
		\leq C\|f\|_{L^1(\Om)}+C\|f\|_{L^{\overline{H}}(\Om)}\|\Phi(u_n)\|_{W^{s,G}_0(\Om)}^{rq^*}.
		$$
		From the hypothesis, we have, ${rq^*}<p^-$. This implies that the norm $\|\Phi(u_n)\|_{W^{s,G}_0(\Om)}$ cannot increase arbitrarily.
		So, there exists a constant $C>0$, independent of $n$, such that $ \|\Phi(u_n)\|_{W^{s,G}_0(\Om)}\leq C$.
		
		By \cref{lemma monotone}, $u_n$ is a monotone increasing sequence. So, we can define $u$ as the pointwise limit of $u_n$. Direct application of Fatou's lemma and \cref{lemma equivalent norm} implies that $ \|\Phi(u)\|_{W^{s,G}_0(\Om)}\leq C$.
	\end{proof}

	\begin{proof}[\textbf{Proof of \cref{main 2}}]
		
		By \cref{lemma monotone}, $u_n$ is a monotone increasing sequence. So, we can define $u$ as the pointwise limit of $u_n$. Next, we show that this $u$ is the required solution.

		We know from \cref{lemma approx problem} that there are $u_n$ which satisfy
		\begin{equation*}
			\I{\RR^N}\I{\RR^N}g\left(\frac{u_n(x)-u_n(y)}{|x-y|^s}\right)\frac{\varphi(x)-\varphi(y)}{|x-y|^{N+s}}dxdy
			=\I{\Om}\frac{f_n(x)\phi(x)}{\left(u_n(x)+\frac{1}{n}\right)^{q(x)}}dx.
		\end{equation*}
		Note that, on $\mbox{supp}(\phi)$, as $f\in L^1(\Om)$,
		$$\left| \frac{f_n(x)\phi(x)}{\left(u_n(x)+\frac{1}{n}\right)^{q(x)}} \right|\leq \|l^{-q(\cdot)}\|_{L^\infty} |f||\phi|\in L^1.$$
		Hence, by dominated convergence theorem, we get 
		$$
		\lim_{n\to \infty}\I{\Om}\frac{f_n(x)\phi(x)}{\left(u_n(x)+\frac{1}{n}\right)^{q(x)}}=\I{\Om}\frac{f(x)\phi(x)}{u(x)^{q(x)}}.
		$$
		So, we need to show that 
		$$\lim_{n\to \infty}\I{\RR^N}\I{\RR^N}g\left(\frac{u_n(x)-u_n(y)}{|x-y|^s}\right)\frac{\varphi(x)-\varphi(y)}{|x-y|^{N+s}}dxdy
		=\I{\RR^N}\I{\RR^N}g\left(\frac{u(x)-u(y)}{|x-y|^s}\right)\frac{\varphi(x)-\varphi(y)}{|x-y|^{N+s}}dxdy.$$
		We have, $\Phi(u)\in W^{s,G}_0(\Om)$ and by \cref{poincare}, it follows that $\Phi(u)\in L^G(\Om)$. Comparing integrals, where $u>1$, it follows that $u\in L^G(\Om)$. We see, using \cref{lemma estimate}, 
		\begin{align*}
		&\left| \I{\RR^N}\I{\RR^N}g\left(\frac{u_n(x)-u_n(y)}{|x-y|^s}\right)\frac{\varphi(x)-\varphi(y)}{|x-y|^{N+s}}dxdy
		-\I{\RR^N}\I{\RR^N}g\left(\frac{u(x)-u(y)}{|x-y|^s}\right)\frac{\varphi(x)-\varphi(y)}{|x-y|^{N+s}}dxdy\right| \\
			&=  \I{\RR^N}\I{\RR^N}\left|g\left(\frac{u_n(x)-u_n(y)}{|x-y|^s}\right)-g\left(\frac{u(x)-u(y)}{|x-y|^s}\right)\right|\frac{|\varphi(x)-\varphi(y)|}{|x-y|^{N+s}}dxdy\\
			&\leq C \I{\RR^N}\I{\RR^N}g\left(\frac{|u_n(x)-u_n(y)|+|u(x)-u(y)|}{|x-y|^s}\right)\frac{|\varphi(x)-\varphi(y)|}{|x-y|^{N+s}}dxdy\\
			&=C \I{\RR^N}\I{\RR^N}I_n\quad \mbox{(assume)}
		\end{align*}
		The proof will be complete if we can show that $\I{\RR^N}\I{\RR^N}I_n\to 0$. To do this, first, set
		
		$$
		\mathcal{S_\phi}:=\mbox{supp}\phi,\quad \mbox{and}\quad \mathcal{Q}_\phi:=(\RR^N\times\RR^N)\setminus(\mathcal{S_\phi}^c\times\mathcal{S_\phi}^c).
		$$
		Now using H\"older's inequality with respect to the measure $\frac{dxdy}{|x-y|^N}$, we get for any compact set $K\subseteq \RR^N\times\RR^N$,
		\begin{align*}
			\II{\RR^{2N}\setminus K}I_n
			&=\II{\mathcal{Q}_\phi\setminus K}I_n\\
			&\leq C \left\|g\left(\frac{|u_n(x)-u_n(y)|+|u(x)-u(y)|}{|x-y|^s}\right)\right\|_{L^{\overline{G}}(\mathcal{Q}_\phi\setminus K,\frac{dxdy}{|x-y|^N})}
			\left\|\frac{|\varphi(x)-\varphi(y)|}{|x-y|^s}\right\|_{L^G(\mathcal{Q}_\phi\setminus K,\frac{dxdy}{|x-y|^N})}.
		\end{align*}
		Now, if $\left\|g\left(\frac{|u_n(x)-u_n(y)|+|u(x)-u(y)|}{|x-y|^s}\right)\right\|_{L^{\overline{G}}(\mathcal{Q}_\phi\setminus K,\frac{dxdy}{|x-y|^N})}\leq1$, we get
		
		$$
		\II{\RR^{2N}\setminus K}I_n\leq C \left\|\frac{|\varphi(x)-\varphi(y)|}{|x-y|^s}\right\|_{L^G(\mathcal{Q}_\phi\setminus K,\frac{dxdy}{|x-y|^N})}.
		$$
		
		Otherwise, we apply \cref{lemma equivalent norm,eq 3} to get
		\begin{align*}
			&\II{\RR^{2N}\setminus K}I_n\\
			&\leq C \left(\II{\mathcal{Q}_\phi\setminus K}\overline{G}\left(g\left(\frac{|u_n(x)-u_n(y)|+|u(x)-u(y)|}{|x-y|^s}\right)\right)\frac{dxdy}{|x-y|^N}\right)^\frac{1}{p^-}
			\left\|\frac{|\varphi(x)-\varphi(y)|}{|x-y|^s}\right\|_{L^G(\mathcal{Q}_\phi\setminus K,\frac{dxdy}{|x-y|^N})}\\
			&\leq C \left(\II{\mathcal{Q}_\phi\setminus K}G\left(\frac{|u_n(x)-u_n(y)|+|u(x)-u(y)|}{|x-y|^s}\right)\frac{dxdy}{|x-y|^N}\right)^\frac{1}{p^-}
			\left\|\frac{|\varphi(x)-\varphi(y)|}{|x-y|^s}\right\|_{L^G(\mathcal{Q}_\phi\setminus K,\frac{dxdy}{|x-y|^N})}\\
			&\leq C \left[\II{\mathcal{Q}_\phi\setminus K}G\left(\frac{|u_n(x)-u_n(y)|}{|x-y|^s}\right)\frac{dxdy}{|x-y|^N}+\II{\mathcal{Q}_\phi\setminus K}G\left(\frac{|u(x)-u(y)|}{|x-y|^s}\right)\frac{dxdy}{|x-y|^N}\right]^\frac{1}{p^-}\\
			&\hspace{200pt}\hfill\times\left\|\frac{|\varphi(x)-\varphi(y)|}{|x-y|^s}\right\|_{L^G(\mathcal{Q}_\phi\setminus K,\frac{dxdy}{|x-y|^N})} .
		\end{align*}
		By \cref{lemma monotone}, there exists $l=l(\mathcal{S}_\phi)>0$ such that for $n$ large enough, $u_n(x)>l$. We now apply \cref{lemma MVT} on the two integrands of the last line to get
		\begin{align*}
			\II{\RR^{2N}\setminus K}I_n
			&\leq C \left[\II{\mathcal{Q}_\phi\setminus K}G\left(\frac{|\Phi(u_n)(x)-\Phi(u_n)(y)|}{|x-y|^s}\right)\frac{dxdy}{|x-y|^N}\right.\\
			&\left. +\II{\mathcal{Q}_\phi\setminus K}G\left(\frac{|\Phi(u(x))-\Phi(u(y))|}{|x-y|^s}\right)\frac{dxdy}{|x-y|^N}\right]^\frac{1}{p^+}\left\|\frac{|\varphi(x)-\varphi(y)|}{|x-y|^s}\right\|_{L^G(\mathcal{Q}_\phi\setminus K,\frac{dxdy}{|x-y|^N})} .
		\end{align*}

		By \cref{lemma equivalent norm,lemma bound}, it is clear that
		\begin{equation*}
			\II{\RR^{2N}\setminus K}I_n\\
			\leq C \left\|\frac{|\varphi(x)-\varphi(y)|}{|x-y|^s}\right\|_{L^G(\mathcal{Q}_\phi\setminus K,\frac{dxdy}{|x-y|^N})} .
		\end{equation*}
		Since $\phi\in C_c^\infty(\Om)$, for a fixed $\varepsilon>0$, there exists $K=K(\varepsilon)$ such that 
		
		\begin{equation*}
			\II{\RR^{2N}\setminus K}I_n< \frac{\varepsilon}{2}.
		\end{equation*}
		
		We, now have to estimate $\II{K}I_n$. For this, we use Vitali's convergence theorem. Let $E\subseteq K$. Arguing as above, we can get
		
		\begin{equation*}
			\II{E}I_n\\
			\leq C \left\|\frac{|\varphi(x)-\varphi(y)|}{|x-y|^s}\right\|_{L^G(E,\frac{dxdy}{|x-y|^N})} .
		\end{equation*}
		This shows that the integrand in LHS is uniformly integrable, that is $\II{E}I_n\to 0$ as $\mathcal{L}^N(E)\to 0$.
		Applying Vitali's convergence theorem, we get for large enough $n$, $\II{E}I_n<\frac{\varepsilon}{2}$.
		So, from \cref{eq 6}, we get $\I{\RR^N}\I{\RR^N}I_n\to 0$ as $n\to \infty$, hence the proof follows.
	\end{proof}

	\begin{proof}[\textbf{Proof of \cref{main 3}}]
		Let $u$ be a solution of \cref{eq problem} obtained through \cref{main 1,main 2}. Then $u$ is pointwise limit of a sequence of solutions, $u_n$, of \cref{eq approx problem}. Also, by \cref{lemma monotone}, there exists $l(K)>0$ for any compact set $K\subseteq \Om$ such that
		$$
		u(x)\geq l(K)>0\quad \mbox{for almost all } x\in K.
		$$
		This implies that there exists some $C_K>0$ such that $u^{-q(x)}(x)\leq C_K$ for all $x\in K$. Fix $x_0\in \Om$ and $r>0$ such that $B:=B(x_0,r)\subset \overline{B(x_0,r)}\subset \Om$. Again, since $u$ is a weak solution of \cref{eq problem}, this implies that for any $\varphi\in\test(B(x_0,r))$, where , with $\varphi\geq 0$,
		\begin{multline}\label{eq 5}
			\I{\RR^N}\I{\RR^N}g\left(\frac{u(x)-u(y)}{|x-y|^s}\right)\frac{(\varphi(x)-\varphi(y))}{|x-y|^{N+s}}dxdy
			=\I{B}f(x)u(x)^{-q(x)}\phi(x)dx\\
			\leq C_B\I{B}f(x)\phi(x)dx
			=  \I{\RR^N}\I{\RR^N}g\left(\frac{v(x)-v(y)}{|x-y|^s}\right)\frac{(\varphi(x)-\varphi(y))}{|x-y|^{N+s}}dxdy,
		\end{multline}
		where $v\in W^{s,G}(B)\cap  L^\infty(B)$, is a solution to the problem
		\begin{equation*}
			\begin{cases}
				(-\D_g)^sv=C_Bf, \quad \mbox{ in } B,\\
				v>0, \mbox{ in } B,\\
				v=0, \mbox{ in } \RR^N\setminus B
			\end{cases}
		\end{equation*}
		obtained through \cref{laplace equation}. By using \cref{comparison}, we can conclude that $u\leq v$ in $B$ if $u$ is continuous on $\RR^N$. That is $u\in L^\infty_{loc} (\Om)$ provided $u$ is continuous on $\RR^N$.
		
		Again, since we have, from \cref{eq 5},
		$$
		\I{\RR^N}\I{\RR^N}g\left(\frac{u(x)-u(y)}{|x-y|^s}\right)\frac{(\varphi(x)-\varphi(y))}{|x-y|^{N+s}}dxdy
		\leq C\I{B}\phi(x)dx,
		$$
		defining the sets 
		\begin{align*}
			U_0&:=\left\{ (x,y)\in\RR^N\times\RR^N\ \Big|\ \frac{|u(x)-u(y)|}{|x-y|^s}\geq 1 \right\},\\
			U_j&:=\left\{ (x,y)\in\RR^N\times\RR^N\ \Big|\ \frac{1}{j+1}\leq\frac{|u(x)-u(y)|}{|x-y|^s}<\frac{1}{j} \right\}\quad \mbox{ for } j\geq 1,
		\end{align*}
		we get from \cref{lemma delta2} that
		\begin{align*}
			C\I{B}\phi(x)dx
			&\geq \I{\RR^N}\I{\RR^N}g\left(\frac{u(x)-u(y)}{|x-y|^s}\right)\frac{(\varphi(x)-\varphi(y))}{|x-y|^{N+s}}dxdy\\
			&= \I{\RR^N}\I{\RR^N}g\left(\frac{u(x)-u(y)}{|x-y|^s}\right)\frac{u(x)-u(y)}{|x-y|^s}\frac{(\varphi(x)-\varphi(y))}{(u(x)-u(y))|x-y|^N}dxdy\\
			&\geq p^-\I{\RR^N}\I{\RR^N}G\left(\frac{|u(x)-u(y)|}{|x-y|^s}\right)\frac{(\varphi(x)-\varphi(y))}{(u(x)-u(y))|x-y|^N}dxdy\\
			&= p^-\sum_{j=0}^\infty j^{p^+}G(\frac{1}{j+1})\II{U_j} \frac{|u(x)-u(y)|^{p^+-2}(u(x)-u(y))(\phi(x)-\phi(y))}{|x-y|^{N+sp^+}}dxdy\\
			&\geq p^-\sum_{j=0}^\infty \frac{j^{p^+}}{(j+1)^{p^+}}G(1)\II{U_j} \frac{|u(x)-u(y)|^{p^+-2}(u(x)-u(y))(\phi(x)-\phi(y))}{|x-y|^{N+sp^+}}dxdy\\
			&\geq C\I{\RR^N}\I{\RR^N} \frac{|u(x)-u(y)|^{p^+-2}(u(x)-u(y))(\phi(x)-\phi(y))}{|x-y|^{N+sp^+}}dxdy
			.
		\end{align*}
		We can now apply Corollary~5.5 of \cite{AnSuSq} to conclude that there is some $\alpha\in(0,1)$ such that $u\in C^\alpha(B)$.
		This completes the proof.
	\end{proof}
	\bigskip
	
	\section*{Acknowledgement}
	\noindent The first author was funded by MATRICS (DST, INDIA) project MTR/2020/000594.\\
	The second and the third author were funded by Academy of Finland grant: Geometrinen Analyysi (21000046081).

	\section*{Author Contribution}
	All the authors have contributed equally to the article.

	\section*{Conflict of Interest}
	The authors have no competing interests to declare that are relevant to the content of this article.
	\bigskip

	\bibliographystyle{alpha}
	\bibliography{bibliography_Orlicz.bib}
\end{document}